\renewcommand\a{\alpha}    
\newcommand\Ga{\mathrm{\Gamma}}
  \newcommand\att{\mathrm{Att}}       
\newcommand\K{\mathsf{K}}    \newcommand\N{\mathbf{N}}
\newcommand{\mz}{Z}
    \newcommand\Alt{\mathrm{Alt}}      \newcommand\Aut{\mathrm{Aut}}
   \newcommand\Cay{\mathrm{Cay}}      \newcommand\Cos{\mathrm{Cos}}
\newcommand\AGL{\mathrm{AGL}}
\newcommand\PSL{\mathrm{PSL}}
\newtheorem{theorem}{Theorem}[section]%
\newtheorem{lemma}[theorem]{Lemma}%
\newtheorem{corollary}[theorem]{Corollary}%
\newtheorem{problem}[theorem]{Problem}%
\newtheorem{question}[theorem]{Question}%
\theoremstyle{definition}
\newtheorem{example}[theorem]{Example}%
\begin{document}

\title[On automorphism groups of half-arc-transitive tetravalent graphs]
{On automorphism groups of half-arc-transitive tetravalent graphs}

\thanks{$^*$Corresponding author. }
\thanks{2020 MR Subject Classification 05C25, 05E18, 20B25.}

\author{Yuan-Dong Li} \author{Binzhou Xia} \author{Jin-Xin Zhou$^*$}
\address{School of mathematics and statistics\\
Beijing Jiaotong University\\
Beijing \\
100044, P. R. China}
\email{23111509@bjtu.edu.cn (Yuan-Dong Li)}

\address{School of Mathematics and Statistics\\The University of Melbourne\\Parkville, VIC 3010\\Australia}
\email{binzhoux@unimelb.edu.au (Binzhou Xia)}

\address{School of mathematics and statistics\\
Beijing Jiaotong University\\
Beijing \\
100044, P. R. China}
\address{Beijing Key Laboratory of Biological Big Data and Topological Statistics\\
Beijing Jiaotong University\\
Beijing\\
100044, P.R. China}
\email{jxzhou@bjtu.edu.cn (Jin-Xin Zhou)}

\begin{abstract}
We characterize connected tetravalent graphs $\Gamma$ which admit groups $M<H$ of automorphisms such that $\Gamma$ is $M$-half-arc-transitive and $H$-arc-transitive. Examples for each case in our characterization are constructed, including a counter-example to a question asked by A. Ramos Rivera and P.~\v{S}parl in 2019 as well as the first example of a tetravalent normal-edge-transitive non-normal Cayley graph on a nonabelian simple group.

\noindent{\textsc {Keywords.}  half-arc-transitive, $s$-arc-transitive, automorphism group.}
\end{abstract}

\maketitle

\section{Introduction}

All graphs considered in this paper are finite, undirected, regular, connected and simple. Let $\Ga$ be a graph. For a positive integer $s$, a sequence $(u_0, u_1, \ldots, u_s)$ of vertices of $\Ga$ is said to be an \emph{$s$-arc} if $u_i$ is adjacent to $u_{i+1}$ for $0\leq i\leq s-1$ and $u_{j-1}\neq u_{j+1}$ for $1\leq j\leq s-1$. A $1$-arc is simply an \emph{arc}. We use $V(\Gamma)$, $E(\Gamma)$, $A(\Ga)$ and $\Aut(\Gamma)$ to denote the vertex set, edge set, arc set and (full) automorphism group of $\Gamma$, respectively.

Let $G$ be a subgroup of $\Aut(\Ga)$. If $G$ is transitive on $V(\Ga)$ and on $E(\Ga)$ but not on $A(\Ga)$, then we say that $\Ga$ is \emph{ $G$-half-arc-transitive}. In the literature, ``half-arc-transitive'' is also abbreviated as ``$\frac{1}{2}$-arc-transitive'' or ``HAT''. When $\Ga$ is $\Aut(\Ga)$-half-arc-transitive, we omit the prefix and simply say that $\Ga$ is half-arc-transitive (or HAT).
The study of HAT graphs was initiated by Tutte in \cite{Tutte1966}. Over the past half-century, substantial progress has been made, and we refer the reader to the survey articles \cite{CPS2015,Marusic1998} for an overview. Much of the work on HAT graphs has focused on the tetravalent case; see, for example, \cite{M-DM,MM1999,MN-JGT,MP1999,MX1997,PW-JCTB2007,RSparl2019,Spiga-Xia2021,Xia-2021,Zhou-JACO,Zhou-PAMS}.

For a positive integer $t$, we say that $\Ga$ is \emph{$(G,t)$-arc-transitive} if $G$ is transitive on the set of $t$-arcs of $\Ga$. If such a subgroup $G$ exists, then a $(G,t)$-arc-transitive graph will be simply called a \emph{$t$-arc-transitive graph}.
For $s\in\mathbb{N}\cup\{\frac{1}{2}\}$, if $\Ga$ is $(G,s)$-arc-transitive but not $(G,\lfloor s+1\rfloor)$-arc-transitive, then we say that $\Ga$ is \emph{$(G,s)$-transitive}.
A pair $(M, H)$ of subgroups of $\Aut(\Ga)$ is called an \emph{$(s,t)$-pair} of $\Ga$ if $M\leq H\leq\Aut(\Ga)$ and $\Ga$ is $(M,s)$-transitive and $(H,t)$-transitive. Such a pair $(M, H)$ is called \emph{maximal} if $M$ is a maximal subgroup of $H$.
Note that, $\Gamma$ admits a maximal $\bigl(\frac{1}{2},t\bigr)$-pair for some positive integer $t$ if and only if $\Gamma$ is $M$-half-arc-transitive and $H$-arc-transitive for some groups $M<H$ of automorphisms.
The following problem is both natural and of considerable interest.

\begin{problem}\label{prob}
Given a graph $\Ga$, classify maximal $(s,t)$-pairs of $\Ga$ for all $s\in\mathbb{N}\cup\{\tfrac{1}{2}\}$ and $t\in\mathbb{N}$ with $s<t$.
\end{problem}

In this paper we address Problem~\ref{prob} for tetravalent graphs in the case $s=\tfrac{1}{2}$.
Our motivation is to better understand the automorphism groups of tetravalent $G$-HAT graphs $\Gamma$.
Deciding whether a given $G$-HAT graph $\Gamma$ is HAT is often difficult, largely because determining $\Aut(\Gamma)$ is hard in general.
If $\Gamma$ is not half-arc-transitive, then $\Aut(\Gamma)$ is arc-transitive; consequently, $\Gamma$ admits a maximal $\bigl(\frac{1}{2},t\bigr)$-pair $(M,H)$ for some positive integer $t$ with $G\le M$.

Another motivation comes from studying the symmetry of the graph of alternating cycles associated with a tetravalent $G$-HAT graph.
Let $\Ga$ be a tetravalent $M$-HAT graph for some $M\leq\Aut(\Ga)$. It is easy to see that the action of
$M$ on the arc set of $\Ga$ has two orbits, denoted by $O_M^+(\Ga)$ and $O_M^-(\Ga)$. For each $\epsilon\in \{+,-\}$,
and each edge of $\Ga$, the orbit $O_M^\epsilon(\Ga)$ contains exactly one of the two arcs corresponding to this edge.
Hence $O_M^+(\Ga)$ and $O_M^-(\Ga)$ give two opposite orientations of the edges of $\Ga$ and hence give two digraphs with the same vertex set as $\Ga$, denoted by $D_M^+(\Ga)$ and $D_M^-(\Ga)$.
A cycle of $\Ga$ is called an \emph{$M$-alternating cycle} if consecutive edges along the cycle have opposite orientations in $D_M^+(\Ga)$ (and hence also opposite orientations in $D_M^-(\Ga)$). In a seminal paper~\cite{Marusic1998} in 1998, Maru\v si\v c proved that all alternating cycles of the tetravalent $M$-HAT graph $\Ga$ have the same length, half of which is called the \emph{$M$-radius} of $\Ga$, and any two alternating cycles of $\Ga$ with nonempty intersection share the same number of vertices. This number is called the \emph{$M$-attachment number} of $\Ga$, denoted by $\att_M(\Ga)$.

The \emph{graph of $M$-alternating cycles} of $\Ga$, denoted $\Alt_M(\Ga)$, is the graph whose vertices are the $M$-alternating cycles of $\Ga$, with two such cycles adjacent whenever they have a non-empty intersection (see~\cite{MP1999,PSparl2017}). In~\cite{RSparl2019}, {Ramos} Rivera and \v{S}parl initiated the study of the automorphism group of $\Alt_M(\Ga)$ for tetravalent $M$-HAT graphs $\Ga$.
They proved that, if either $\att_M(\Ga)$ equals the $M$-radius or $\Aut(\Ga)$ contains an element interchanging $O_M^+(\Ga)$ and $O_M^-(\Ga)$, then $\Alt_M(\Ga)$ is arc-transitive (see~\cite[Proposition~5.7]{RSparl2019}). However, a tetravalent arc-transitive $M$-HAT graph need not admit an automorphism interchanging $O_M^+(\Ga)$ and $O_M^-(\Ga)$, and so this criterion does not always apply. Despite this, all computed examples in~\cite{RSparl2019} still have $\Alt_M(\Ga)$ arc-transitive. This led the authors to ask the following question concerning potential weaker conditions ensuring the arc-transitivity of $\Alt_M(\Ga)$, and similarly for the related quotient graph $\Ga_{\mathcal{B}(M)}$ (with respect to a certain $M$-invariant partition $\mathcal{B}(M)$ of $V(\Ga)$; see~\cite{MP1999} or~\cite{RSparl2019} for definition). In particular, $\Ga_{\mathcal{B}(M)}=\Ga$ whenever $\att_M(\Ga)=1$.

\begin{question}[{\cite[Question~5.8]{RSparl2019}}]\label{que:Sparl}
Is it true that, if $\Ga$ is a tetravalent arc-transitive graph which is $M$-HAT for some $M\leq\Aut(\Ga)$, the graphs $\Alt_M(\Ga)$ and $\Ga_{\mathcal{B}(M)}$ are both arc-transitive?
\end{question}

Note that any larger HAT group $N>M$ has the same orbits on $A(\Gamma)$ with $M$, hence induces the same graph of alternating cycles. We may let $M$ be maximal subject to the condition that $\Ga$ is $M$-HAT. Take $H\leq\Aut(\Ga)$ such that $M$ is maximal in $H$. Then $H$ is arc-transitive on $\Ga$, and so $(M, H)$ is a maximal $(\frac{1}{2}, t)$-pair. If $M$ is normal in $H$, then $H$ preserves $\{O_M^+(\Ga),O_M^-(\Ga)\}$,  and so we deduce from~\cite[Proposition~5.7]{RSparl2019} that $\Alt_M(\Ga)$ is arc-transitive. Thus, the question is reduced to the case that $M$ is non-normal in $H$.

Our main theorem characterizes tetravalent graphs admitting a maximal $(\frac{1}{2}, t)$-pair with $t\geq1$. Recall that, for a subgroup $X$ of a group $Y$, the \emph{core} of $X$ in $Y$, denoted by $\mathrm{Core}_Y(X)$, is the largest normal subgroup of $Y$ contained in $X$.
Let $\Ga$ be a graph. Assume that $G\leq\Aut(\Ga)$ is such that $G$ is transitive on the vertex set of $\Ga$. Let $N$ be a normal subgroup of $G$ such that $N$ is intransitive on $V(\Ga)$. The \emph{normal quotient graph\/} $\Ga_N$ of $\Ga$ relative to $N$ is defined as the graph whose vertices are the orbits of $N$ on $V(\Ga)$ and where two different orbits are adjacent if there exists an edge in $\Ga$ between the vertices lying in those two orbits. If $\Ga_N$ and $\Ga$ have the same valency, then $\Ga$ is a \emph{cover} of $\Ga_N$.

\begin{theorem}\label{Thm}
Let $\Ga$ be a tetravalent graph, let $(M, H)$ be a maximal $(\frac{1}{2}, t)$-pair of $\Ga$ with $t\geq1$, and let $K=\mathrm{Core}_H(M)$. Take $u\in V(\Ga)$. Then one of the following holds.
\begin{enumerate}[\rm(a)]
\item\label{Thmcase:t=3} $t=3$, $(H/K, M/K, H_u, M_u)=(A_{72}, A_{71}, S_4\times S_3, \mz_2)$, and $\Ga$ is a cover of $\Ga_K$.
\item\label{Thmcase:t=2} $t=2$, $(H/K, M/K, H_u, M_u)= (S_5, \AGL_1(5), A_4, \mz_2)$, and $\Ga$ is a cover of $\Ga_K\cong \K_{5, 5}-5\K_2$.
\item\label{Thmcase:t=1} $t=1$, and either $M$ is normal in $H$, or $M_u^{\Ga(u)}\cong\mz_2$ is non-normal in $H_u^{\Ga(u)}\cong D_8$. {Moreover, in the latter case, $K$ is semiregular on $V(\Ga)$, $|V(\Ga_K)|$ is not a power of $2$, and one of the following holds:}
\begin{enumerate}[\rm (c1)]
\item\label{Thmcase:t=1_cover} $K$ is the kernel of $H$ acting on $V(\Ga_K)$, $\Ga$ is a cover of $\Ga_K$, and the pair $(M/K, H/K)$ is a maximal $(\frac{1}{2}, 1)$-pair of $\Ga_K$;
\item\label{Thmcase:t=1_cycle} $K$ is the kernel of $M$ acting on $V(\Ga_K)$, $\Ga_K\cong C_r$, and $M/K\cong D_{2r}$.
\end{enumerate}
\end{enumerate}
Moreover, examples for each of the above cases exist.
\end{theorem}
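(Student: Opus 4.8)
The plan is to decouple the local action on a neighbourhood from the global structure of the primitive quotient $H/K$, and then to treat the cases $t\geq2$ and $t=1$ by genuinely different mechanisms.

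I would start locally at the vertex $u$. Arc-transitivity of $H$ makes $H_u^{\Gamma(u)}$ a transitive subgroup of $\mathrm{Sym}(\Gamma(u))\cong S_4$, while the half-arc-transitivity of $M$ forces $M_u^{\Gamma(u)}$ to preserve the partition of $\Gamma(u)$ into the two $M$-orbits (the in- and out-neighbours) and to act transitively on each part; hence $M_u^{\Gamma(u)}$ is either $\mz_2$ or $\mz_2^2$. Listing the transitive groups $\mz_4,\mz_2^2,D_8,A_4,S_4$ of degree $4$ together with their two-orbit subgroups yields a short list of admissible pairs $(H_u^{\Gamma(u)},M_u^{\Gamma(u)})$. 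The decisive dichotomy is whether $H_u^{\Gamma(u)}$ is $2$-transitive: since $t\geq2$ is equivalent to $2$-transitivity of $H_u^{\Gamma(u)}$, we get $H_u^{\Gamma(u)}\in\{A_4,S_4\}$ when $t\geq2$ and $H_u^{\Gamma(u)}\in\{\mz_4,\mz_2^2,D_8\}$ when $t=1$; in the latter situation $M_u^{\Gamma(u)}$ fails to be normal exactly when $H_u^{\Gamma(u)}\cong D_8$ with $M_u^{\Gamma(u)}$ an edge-type involution, which is the configuration recorded in case~\ref{Thmcase:t=1}.

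For $t\geq2$ I would invoke the classification of vertex-stabiliser amalgams for $s$-arc-transitive tetravalent graphs (Gardiner, Weiss, and Poto\v{c}nik) to upgrade the local action to the full stabiliser $H_u$ and the arc stabiliser, and thereby read off $t$. This should force either $H_u\cong A_4$ with $t=2$ or $H_u\cong S_4\times S_3$ with $t=3$, in each case with $M_u\cong\mz_2$, so that $|H:M|=|H_u:M_u|$ equals $6$ or $72$. Maximality of $M$ now makes $\overline H=H/K$ a primitive permutation group of that degree with point stabiliser $\overline M=M/K$. In degree $6$ the primitive groups possessing a point stabiliser of order $20$ reduce to $\PGL_2(5)\cong S_5$ on $\PG(1,5)$ with stabiliser $\AGL_1(5)$, giving case~\ref{Thmcase:t=2}; that $\Gamma_K\cong\K_{5,5}-5\K_2$ then follows by recovering the $S_5$-action on its $10$ vertices through the coset geometry. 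In degree $72$ the identification is harder: using the amalgam to produce an element of $\overline H$ whose induced permutation of the $72$ cosets has small support, I would apply a Jordan-type primitivity theorem to conclude $A_{72}\leq\overline H\leq S_{72}$, and then rule out $S_{72}$ (for instance because the involutions and order-$3$ elements supplied by the local kernel act as even permutations), obtaining $\overline H\cong A_{72}$ and hence $\overline M\cong A_{71}$, which is case~\ref{Thmcase:t=3}. In both cases semiregularity of $K$, and thus the covering $\Gamma\to\Gamma_K$, follows from $K\cap H_u\leq M_u$ being normalised by no nontrivial normal subgroup, whence $K\cap H_u=1$.

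For $t=1$ with $M$ non-normal in $H$ the primitive degree is no longer fixed, so I would argue through the normal quotient directly. Here $H_u^{\Gamma(u)}\cong D_8$ and $M_u^{\Gamma(u)}\cong\mz_2$ is an edge-type involution, and I would first establish that $K=\mathrm{Core}_H(M)$ is semiregular on $V(\Gamma)$. The condition $|V(\Gamma_K)|\neq2^\ell$ I would obtain by showing that a $2$-power would supply an automorphism interchanging $O_M^+(\Gamma)$ and $O_M^-(\Gamma)$, forcing $M\trianglelefteq H$ against the non-normality hypothesis. The remaining split is governed by the kernel of the action on $V(\Gamma_K)$: if $K$ is the kernel for $H$ then $\Gamma\to\Gamma_K$ is a covering and $(M/K,H/K)$ inherits a maximal $(\tfrac12,1)$-pair, which is case~\ref{Thmcase:t=1_cover}; if $K$ is only the kernel for $M$ then $M/K$ collapses onto a dihedral group and $\Gamma_K\cong C_r$, which is case~\ref{Thmcase:t=1_cycle}. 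The alternative $M\trianglelefteq H$ is immediate.

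The principal obstacle is the global identification $\overline H\cong A_{72}$ in case~\ref{Thmcase:t=3}: in contrast to degree $6$, degree $72$ supports many primitive groups of affine, product, and almost-simple type, so isolating the full alternating group needs both the precise amalgam input on cycle supports and an appeal to the classification of finite simple groups. Finally, the existence clause would be settled by explicit constructions realising each configuration---a suitable cover attached to the degree-$72$ action of $A_{72}$ for case~\ref{Thmcase:t=3}, the promised cover of $\K_{5,5}-5\K_2$ for case~\ref{Thmcase:t=2}, and standard covering and cyclic constructions for cases~\ref{Thmcase:t=1_cover} and~\ref{Thmcase:t=1_cycle}.
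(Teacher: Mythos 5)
Your overall architecture (local dichotomy at $u$, primitive quotient $\overline H=H/K$ acting on $[\overline H:\overline M]$ for $t\geq2$, normal-quotient reduction for $t=1$) parallels the paper's, but the proposal has genuine gaps at exactly the points where the real work lies. For $t\geq2$ you assert that the amalgam classification ``should force'' $H_u\cong A_4$ with $t=2$ or $H_u\cong S_4\times S_3$ with $t=3$, with $M_u\cong\mz_2$ in both cases. It does not: Poto\v cnik's list of faithful $2$-transitive amalgams of index $(4,2)$ also contains $S_4$, $\mz_3\times A_4$, two forms of $\mz_3\rtimes S_4$, $\AGL_2(3)$ (with $t=4$) and $\mz_3^3.\AGL_2(3)$ (with $t=7$), and nothing in the amalgam data alone excludes these. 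Eliminating them requires exploiting the coexistence of the half-arc-transitive subgroup $\overline M$ with the factorization $\overline H=\overline M\,\overline H_{\overline u}$, core-freeness and primitivity, the existence of an arc-reversing $2$-element $h$ with $h^2\in\overline H_{\overline u}$, and the existence of a ``forward'' element in $\overline M$; the paper carries this out amalgam by amalgam via a MAGMA search, and both the reduction to two amalgams and the conclusion $M_u\cong\mz_2$ are \emph{outputs} of that search, whereas in your write-up they are inputs. Your plan for identifying $\overline H\cong A_{72}$ (Jordan-type theorem plus CFSG, then a parity argument to exclude $S_{72}$) is a plausible sketch of a computer-free substitute, but it presupposes that the degree is $72$, i.e.\ the very step that is missing; likewise the existence clause of the theorem is deferred to unspecified constructions, while the paper must (and does) build explicit examples, e.g.\ the coset graph of $A_{72}$ over $\mz_3\rtimes S_4$ and the $\PGL_2(7)\wr S_2$ example.

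In the $t=1$ case two of your steps would fail as written. First, ``the alternative $M\trianglelefteq H$ is immediate'' is not true: from normality of $M_u^{\Ga(u)}$ in $H_u^{\Ga(u)}$ one only gets $|H:M|=|H_u:M_u|=|H_u^{\Ga(u)}:M_u^{\Ga(u)}|\cdot|H_u^{[1]}:M_u^{[1]}|$, so one must prove $H_u^{[1]}=M_u^{[1]}$, and one must also dispose of the configuration where $M_u^{\Ga(u)}\cong\mz_2$ is the \emph{center} of $H_u^{\Ga(u)}\cong D_8$ (there the local index is $4$, so no index-$2$ argument applies; the paper shows this configuration is impossible). Both points rest on the paper's Lemma~\ref{lem:hat-1t}: a connectivity/propagation argument showing that any element of $H_u$ preserving the two $M_u$-orbits on $\Ga(u)$ preserves the two $M$-orbits on $A(\Ga)$ globally, hence lies in $M$ by maximality of $M$ subject to half-arc-transitivity. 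Second, your mechanism for $|V(\Ga_K)|\neq2^\ell$ --- that a $2$-power would ``supply an automorphism interchanging $O_M^+(\Ga)$ and $O_M^-(\Ga)$'' --- is unsupported; no reason is given why such an automorphism should exist, and I do not see one. The paper's argument is elementary and different: since $K$ is semiregular and $H_u$ is a $2$-group (because $H_u^{\Ga(u)}\cong D_8$), if $r=|V(\Ga_K)|$ were a $2$-power then $H/K$ would be a $2$-group of order $r|H_u|\geq 4|M/K|$, contradicting maximality of $M$ because maximal subgroups of $2$-groups have index $2$. Finally, a smaller gap: for $t\geq2$ the covering $\Ga\to\Ga_K$ requires showing $K$ has at least three orbits; the paper rules out one or two orbits by Sylow arguments together with the Haar-graph lemma, and your one-line semiregularity remark does not address this.
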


{We summarize the proof of Theorem~\ref{Thm} in Section~\ref{sec:sum}, and present the full details in Sections~\ref{sec:proofofmain} and~\ref{sec:examples}.} More precisely, we first prove in Section~\ref{sec:proofofmain} that {under the assumptions of the theorem,} one of~\eqref{Thmcase:t=3}--\eqref{Thmcase:t=1} must occur, and then in Section~\ref{sec:examples} we construct examples for each of the cases~\eqref{Thmcase:t=3},~\eqref{Thmcase:t=2},~\eqref{Thmcase:t=1_cover} and~\eqref{Thmcase:t=1_cycle}. The example for~\eqref{Thmcase:t=1_cover} yields the following corollary, which provides a negative answer to Question~\ref{que:Sparl} of {Ramos} Rivera and \v{S}parl~\cite{RSparl2019}.

\begin{corollary}\label{cor-1}
There exists a tetravalent graph $\Ga$ admiting a maximal $(\frac{1}{2}, 1)$-pair $(M,\Aut(\Gamma))$ such that $\Alt_M(\Ga)$ is half-arc-transitive.
\end{corollary}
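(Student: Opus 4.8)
The plan is to take the explicit graph $\Ga$ produced for case~\eqref{Thmcase:t=1_cover} of Theorem~\ref{Thm}, for which $(M,H)=(M,\Aut(\Ga))$ is a maximal $(\tfrac12,1)$-pair with $M$ non-normal in $H$, and to show directly that its graph of $M$-alternating cycles $\Alt_M(\Ga)$ is half-arc-transitive. The first task is to check that neither sufficient condition of~\cite[Proposition~5.7]{RSparl2019} holds for this example, since otherwise $\Alt_M(\Ga)$ would be forced to be arc-transitive: I would compute the $M$-radius $R_M(\Ga)$ and the attachment number $\att_M(\Ga)$ from Maru\v si\v c's invariants and confirm that $\att_M(\Ga)\neq R_M(\Ga)$, and I would verify that $\Aut(\Ga)=H$ contains no element interchanging $O_M^+(\Ga)$ and $O_M^-(\Ga)$. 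The latter is where the non-normality of $M$ in $H$ is used: a reversing element would preserve the pair $\{O_M^+(\Ga),O_M^-(\Ga)\}$, and one checks from the explicit structure forced by Theorem~\ref{Thm}\eqref{Thmcase:t=1_cover} that no such element exists.

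Next I would produce a vertex- and edge-transitive group of automorphisms of $\Alt_M(\Ga)$. Every element of $M$ preserves each of $O_M^+(\Ga)$ and $O_M^-(\Ga)$, hence carries $M$-alternating cycles to $M$-alternating cycles and induces an automorphism of $\Alt_M(\Ga)$; writing $\bar M$ for the induced permutation group, the edge-transitivity of $M$ on $\Ga$ together with the transitivity of $M$ on the $M$-alternating cycles yields that $\bar M$ is transitive on both the vertices and the edges of $\Alt_M(\Ga)$. This is the ``lower bound'', giving at least edge- and vertex-transitivity.

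The substantive step, and the main obstacle, is the ``upper bound'': proving that the full group $\Aut(\Alt_M(\Ga))$ is \emph{not} transitive on arcs. A priori $\Alt_M(\Ga)$ may carry symmetries that are not induced from $\Ga$ at all, so the structural control that Theorem~\ref{Thm} gives over $\Aut(\Ga)$ does not by itself bound $\Aut(\Alt_M(\Ga))$; indeed it was exactly such hidden symmetries that kept every previously computed example of~\cite{RSparl2019} arc-transitive. I would therefore identify $\Alt_M(\Ga)$ as a concrete graph and compute $\Aut(\Alt_M(\Ga))$ explicitly in \MAGMA, then check that $\bar M$ and $\Aut(\Alt_M(\Ga))$ share the same two orbits on arcs, so that no automorphism fuses the two $\bar M$-arc-orbits. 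Together with the previous paragraph this shows $\Alt_M(\Ga)$ is half-arc-transitive, yielding the desired counterexample to Question~\ref{que:Sparl}.
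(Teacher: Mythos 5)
Your proposal is correct and is essentially the paper's own argument: the paper proves the corollary via Example~\ref{exam:hat_in_1-trans_altgra}, constructing the explicit graph $\Ga$ for case~\eqref{Thmcase:t=1_cover}, computing $\att_M(\Ga)=1$, and verifying by \MAGMA\ computation that $\Aut(\Ga)=X$ (so $(M,\Aut(\Ga))$ is a maximal $(\frac{1}{2},1)$-pair) and that $\Aut(\Alt_M(\Ga))$ equals the induced group $M$, which is vertex- and edge- but not arc-transitive on $\Alt_M(\Ga)$. Your preliminary check that the sufficient conditions of~\cite[Proposition~5.7]{RSparl2019} fail is a harmless sanity check rather than a needed step, and your final computational verification of the arc-orbits is exactly the decisive step the paper performs.
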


Another byproduct of proving Theorem~\ref{Thm} is the discovery of the first example of a tetravalent normal-edge-transitive but non-normal Cayley graph on a nonabelian simple group.
Given a group $G$ and an inverse-closed generating subset $S\subseteq G\setminus\{1\}$, the \emph{Cayley graph} $\Cay(G,S)$ on $G$ is the graph with vertex set $G$ and edge set $\{\{g, sg\} \mid g\in G, s\in S\}$. Denote by $R_G$ the right multiplication action of $G$ on itself, and denote
\[
\Aut(G, S)=\{\a\in\Aut(G)\mid S^\a=S\}.
\]
It is well known~\cite{Godsil1981} that the normalizer of $R_G(G)$ in $\Aut(\Cay(G,S))$ is $R_G(G)\rtimes\Aut(G,S)$. We call $\Cay(G,S)$ \emph{normal-edge-transitive} if this normalizer is transitive on $E(\Cay(G,S))$, and \emph{normal} if this normalizer equals $\Aut(\Cay(G,S))$.
In 1999, Praeger~\cite{Praeger1999} proved that every normal-edge-transitive Cayley graph of valency $3$ on a nonabelian simple group is normal. This result was recently extended to all prime valencies in~\cite{Zhang-F-Z-Y}, where it was also shown that there exist normal-edge-transitive non-normal Cayley graphs of valency $8$ on nonabelian simple groups. However, it has remained open whether such graphs exist at smaller valencies, particularly valency $4$, the smallest possible valency, despite substantial work on tetravalent edge-transitive Cayley graphs on nonabelian simple groups~\cite{FangLiXu,FangWZ}. Our example for Theorem~\ref{Thm}\,\eqref{Thmcase:t=3} resolves this question by establishing the following:

\begin{corollary}\label{cor-2}
There exists a tetravalent normal-edge-transitive non-normal Cayley graph on $A_{71}$.
\end{corollary}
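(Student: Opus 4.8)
The plan is to verify that the connected tetravalent graph $\Ga$ built in Section~\ref{sec:examples} for Theorem~\ref{Thm}\,\eqref{Thmcase:t=3} is itself the required example. From that construction we have a maximal $(\tfrac{1}{2},3)$-pair $(M,H)$ with $K=\mathrm{Core}_H(M)\cong\mz_2$, $H/K\cong A_{72}$, $M/K\cong A_{71}$, $M_u\cong\mz_2$ and $H_u\cong S_4\times S_3$; in particular $K\cong\mz_2$ is central in $H$. Since $|H|=|K|\,|H/K|=2\,|A_{72}|$ and $|H_u|=144$, the graph has $|V(\Ga)|=|H:H_u|=2|A_{72}|/144=|A_{71}|$ vertices. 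Thus it suffices to exhibit a regular subgroup $R\cong A_{71}$ of $\Aut(\Ga)$ and then to check the two symmetry conditions for the resulting Cayley graph.

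To produce $R$, I would first show that $M$, which is a central extension of $M/K\cong A_{71}$ by the central subgroup $K\cong\mz_2$, splits as $M\cong\mz_2\times A_{71}$; then $R:=[M,M]\cong A_{71}$ is the unique subgroup of $M$ isomorphic to $A_{71}$, is characteristic in $M$, and satisfies $R\cap K=1$ and $RK=M$. Because $R\le M$ we have $R\cap H_u=R\cap M_u$, so $R$ is regular precisely when the involution generating $M_u$ lies outside $R$, that is, has nontrivial $K$-component. Granting this, $R$ is semiregular with $|R|=|V(\Ga)|$ and hence regular, so $\Ga\cong\Cay(A_{71},S)$ for the connection set $S$ determined by $R$.

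The two symmetry properties then follow cleanly. Since $R\cap K=1$ and $RK=M$, the image of $R$ in $H/K\cong A_{72}$ equals $M/K\cong A_{71}$, a proper nontrivial subgroup of the simple group $A_{72}$; hence this image is non-normal, so $R$ is non-normal already in $H$ and therefore in $\Aut(\Ga)\ge H$. For normal-edge-transitivity, recall that $R\trianglelefteq M$ and that $M$ is transitive on $E(\Ga)$ because $\Ga$ is $M$-half-arc-transitive; thus $M\le N_{\Aut(\Ga)}(R)$, and the normalizer of $R$ in $\Aut(\Ga)$ is already edge-transitive, which is exactly normal-edge-transitivity. Note that neither property requires computing $\Aut(\Ga)$ in full.

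The main obstacle is the structural input about $M$ rather than any graph theory: one must simultaneously rule out the nonsplit double cover $M\cong 2.A_{71}$ (which contains no copy of $A_{71}$) and the degenerate case $M_u\le R$ (under which $R$ would have two orbits and $\Ga$ would fail to be a Cayley graph on $A_{71}$). I expect both to be dispatched at once by inspecting the explicit generators of $M$ from Section~\ref{sec:examples} and exhibiting an involution generating $M_u$ whose $K$-component is the nontrivial element of $K$: this simultaneously splits the extension as $\mz_2\times A_{71}$ and certifies the regularity of $R$.
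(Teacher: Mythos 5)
There is a genuine gap, and it lies at the very first step: the structure you attribute to the paper's example is not what the paper constructs, and the object your argument needs is never shown to exist. In Example~\ref{exam:hat_in_3-trans_non-normal}, the graph realizing Theorem~\ref{Thm}\,\eqref{Thmcase:t=3} is $\Sigma=\Cos(X,R(M),R(M)xR(M))$ with $X=A_{72}$, and the maximal $(\frac{1}{2},3)$-pair is $(X_v,X)\cong(A_{71},A_{72})$; since $A_{72}$ is simple and $X_v$ is a proper subgroup, the core is $K=\mathrm{Core}_X(X_v)=1$, not a central $\mz_2$. Consequently $\Sigma$ has $|A_{72}|/|S_3\times S_4|=|A_{71}|/2$ vertices, so it cannot contain \emph{any} regular subgroup isomorphic to $A_{71}$, and your plan to ``inspect the explicit generators of $M$ from Section~\ref{sec:examples} and exhibit an involution generating $M_u$ with nontrivial $K$-component'' cannot be carried out: there is no nontrivial $K$ there. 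Nor does Theorem~\ref{Thm} itself guarantee the existence of a case~\eqref{Thmcase:t=3} graph satisfying your extra hypotheses ($K\cong\mz_2$ central, $M\cong\mz_2\times A_{71}$ split, and $M_u\nleq[M,M]$). Indeed, the most natural candidate, the canonical double cover of $\Sigma$ with $H=A_{72}\times\mz_2$ and $M=A_{71}\times\mz_2$, has exactly the right group-theoretic shape but fails your own regularity requirement, because there the vertex stabilizer $M_u$ lies inside $A_{71}\times 1=[M,M]=R$. So the existence of your hypothesized cover is precisely the hard part of the corollary, and it is left unproved.

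By contrast, the paper avoids covers altogether: alongside $\Sigma$ it builds a second coset graph $\Ga=\Cos(X,Y,YxY)$ on the same $X=A_{72}$, where $Y=R(M'\rtimes\langle ac\rangle)\cong\mz_3\rtimes S_4$ has order $72$ and acts regularly on the $72$ points, so that the point stabilizer $X_v\cong A_{71}$ of the natural action is regular on $[X:Y]$ and $\Ga\cong\Cay(A_{71},S)$ has the right number of vertices. The existence content is a computer search for an element $x$ of order $4$ with $\langle Y,x\rangle=X$ (connectivity), $Y\cap Y^x=Z$ of index $4$ in $Y$ (valency $4$), and $YxY=YS$ with $S=\{g,g^{-1},g^h,(g^h)^{-1}\}$ for some involution $h\in X_v$; this last shape is what makes $\N_{\Aut(\Ga)}(R_{X_v}(X_v))$ edge-transitive, since conjugation by $h$ together with inversion and right translations moves any edge at the identity to any other. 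Your closing deductions (non-normality from simplicity of $A_{72}$, and normal-edge-transitivity from $R\trianglelefteq M$ with $M$ edge-transitive) are sound conditional arguments of the same flavor as the paper's, but they are applied to an object that neither you nor the paper constructs; your proposal contains no substitute for the search that produces $x$, which is where the actual proof of Corollary~\ref{cor-2} lives.
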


\section{Preliminaries}

For a positive integer $n$ and prime $p$, let $n_p$ denote the largest $p$-power dividing $n$. The $n$-cycle graph is denoted by $C_n$, the cyclic group of order $n$ is denoted by $\mz_n$, and the dihedral group of order $2n$ is denoted by $D_{2n}$.

Let $G$ be a permutation group on a set $\Omega$. If $G$ stabilizes a subset $\Delta\subseteq \Omega$, then denote by $G^\Delta$ the permutation group on $\Delta$ induced by $G$.  For $v\in \Omega$, the stabilizer in $G$ of $v$ is
the subgroup $G_v:=\{g\in G \mid v^g=v\}$ of $G$. If $G_v$ is trivial for every $v\in\Omega$, then we say that $G$ is \emph{semiregular} on $\Omega$, and that it is \emph{regular} if in addition $G$ is transitive on $\Omega$.

Let $\Ga$ be a graph and let $G\leq\Aut(\Ga)$. The neighborhood of a vertex $v\in V(\Ga)$ is denoted by $\Ga(v)$, and the kernel of the natural homomorphism $G_v\to G_v^{\Ga(v)}$ is denoted by $G_v^{[1]}$. If $G$ is transitive on $V(\Ga)$, $E(\Ga)$ or $A(\Ga)$, then we say that $\Ga$ is \emph{$G$-vertex-transitive}, \emph{$G$-edge-transitive} or \emph{$G$-arc-transitive}, respectively.

\begin{lemma}[{\cite[Theorem~1.1]{AAMPS2016}}]\label{lem:4HAT}
Let $\Ga$ be a tetravalent $G$-HAT graph and let ${K}\trianglelefteq G$ such that $K$ has at least three orbits on $V(\Ga)$. Then one of the following holds:
\begin{enumerate}[{\rm (a)}]
  \item $\Ga$ is a cover of $\Ga_K$, $K$ is semiregular on $V(\Ga)$ and is the kernel of $G$ acting on $V(\Ga_K)$, and {$\Gamma_K$} is $G/K$-HAT.
  \item $\Ga_K$ is an $r$-cycle, and {the permutation group induced by $G$} on $V(\Ga_K)$ is $\mz_r$ or $D_{2r}$.
\end{enumerate}
\end{lemma}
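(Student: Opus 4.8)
The plan is to analyze the normal quotient through the canonical orientation that every tetravalent HAT graph carries, reducing the whole dichotomy to a single divisibility relation. First I would record the local and orientation data. Since $\Ga$ is $G$-HAT, the two $G$-orbits on $A(\Ga)$ determine a $G$-invariant orientation in which every vertex has two out-neighbours and two in-neighbours; as $N\trianglelefteq G$, the group $N$ preserves this orientation as well. A standard orbit argument shows that $G_u$ has exactly two orbits on $\Ga(u)$, namely the out-pair and the in-pair, so in particular some element of $G_u$ interchanges the two out-neighbours (and simultaneously the two in-neighbours). Because $N$ is normal in the transitive group $G$, all $N$-orbits share a common size, so $\Ga_N$ is a well-defined graph on which $\ov{G}:=G^{V(\Ga_N)}$ acts vertex- and edge-transitively.

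The backbone of the argument is then an edge-transitivity computation. I would use edge-transitivity together with normality to establish two facts: there are no edges inside an $N$-orbit (otherwise, since $G$ permutes the $N$-orbits as a block system, the ``internal'' and ``cross'' edges would fall into distinct $G$-orbits on $E(\Ga)$, and connectedness together with at least three orbits forces all edges to be cross edges); and every pair of adjacent $N$-orbits carries the same per-vertex multiplicity $c$, meaning the number of neighbours of a fixed $u$ lying in a fixed adjacent orbit is a constant independent of both $u$ (by $N$-transitivity) and the orbit (by $G$-transitivity on quotient edges). Writing $d$ for the valency of $\Ga_N$ and summing over the adjacent orbits yields $d\cdot c=4$, hence $(d,c)\in\{(4,1),(2,2),(1,4)\}$. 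The value $d=1$ is impossible, since a connected $1$-regular graph is $\K_2$, contradicting the hypothesis of at least three orbits. Thus either $(d,c)=(4,1)$ or $(d,c)=(2,2)$.

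In the case $(d,c)=(4,1)$ I would derive conclusion (a). Here the quotient preserves valency, so $\Ga$ is a cover of $\Ga_N$ and the local map $\Ga(u)\to\Ga_N(u^N)$ is a bijection. Since each neighbour of $u$ then lies alone in the trace of its orbit on $\Ga(u)$, any element of $N_u$ fixes every neighbour of $u$, whence by connectedness $N_u=1$ and $N$ is semiregular; an orbit-counting (Frattini) argument, using that $N$ is transitive on each orbit and contained in the kernel $K$ of $G$ on $V(\Ga_N)$, then gives $|K|=|N|$ and so $N=K$. Finally, because $c=1$, all edges between two adjacent orbits receive the same direction, so the orientation descends to a $\ov{G}$-invariant orientation of $\Ga_N$ with out-degree $2$; as $\ov G=G/N$ is vertex- and edge-transitive but cannot be arc-transitive while preserving a proper orientation, $\Ga_N$ is $(G/N)$-HAT.

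In the case $(d,c)=(2,2)$ I would derive conclusion (b): a connected $2$-regular graph is a cycle, so $\Ga_N\cong C_r$ with $r=|V(\Ga_N)|\geq3$. The subtlety here, which I expect to be the main obstacle, is to show that the vertex- and edge-transitive group $\ov G\leq\Aut(C_r)=D_{2r}$ is exactly $\mz_r$ or $D_{2r}$, since for even $r$ the group $D_{2r}$ contains proper vertex- and edge-transitive subgroups (for instance a regular Klein four-group generated by fixed-point-free reflections) that must be excluded. I would resolve this by computing the vertex stabiliser $\ov G_{u^N}$ from the local HAT action, noting that the two out-neighbours and two in-neighbours of $u$ are distributed over the two adjacent orbits in one of exactly two ways. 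If both out-neighbours lie in one adjacent orbit and both in-neighbours in the other, the orientation is consistent around the cycle and the element of $G_u$ swapping the out-pair fixes both neighbouring orbits, so $\ov G_{u^N}=1$ and $\ov G$ is a regular, orientation-preserving subgroup of $D_{2r}$, forcing $\ov G=\mz_r$. If instead each adjacent orbit receives one out- and one in-neighbour, the element of $G_u$ swapping the out-pair interchanges the two neighbouring orbits, so $\ov G_{u^N}$ contains the reflection fixing $u^N$; as a vertex stabiliser in $D_{2r}$ has order at most $2$, this gives $|\ov G_{u^N}|=2$ and hence $\ov G=D_{2r}$. This dichotomy simultaneously excludes the anomalous regular subgroups, which have trivial vertex stabiliser yet reverse the orientation and therefore fail both the first case (orientation-preservation) and the second (nontrivial stabiliser), completing the proof.
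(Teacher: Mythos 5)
The paper does not prove this lemma at all: it is imported verbatim from [AAMPS2016, Theorem~1.1], so there is no internal proof to compare against. Your blind proof is, as far as I can check, correct, and it reconstructs essentially the standard argument used in that source: pass to the $G$-invariant orientation with in- and out-valency $2$, show edge-transitivity forbids edges inside $N$-orbits, establish the constant per-vertex multiplicity $c$ toward each adjacent orbit, and read the dichotomy off $dc=4$ with $(d,c)\in\{(4,1),(2,2)\}$ (the case $d=1$ being excluded by connectivity and the hypothesis of at least three orbits). Your treatment of the cover case $(4,1)$ --- unique neighbour in each orbit trace forces $N_u=1$ by connectivity, the matching edges between adjacent orbits inherit a uniform direction, and the descended proper orientation rules out arc-transitivity of $G/N$ --- and of the cycle case $(2,2)$ via the two possible distributions of the out-pair over the two adjacent orbits, yielding a regular orientation-preserving induced group (hence the cyclic group of order $r$) or a vertex stabiliser of order $2$ (hence $D_{2r}$), is sound; in particular the uniformity of the two distributions across vertices follows from vertex-transitivity as you implicitly use.

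Two minor blemishes, neither fatal. First, in the cover case you dispatch $N=\mathrm{Core}$-of-the-action with a one-line ``Frattini/orbit-counting'' remark; what is actually needed is that the kernel $K$ of $G$ on $V(\Gamma_N)$ is itself semiregular, which follows by repeating your unique-neighbour argument with $K_u$ in place of $N_u$ (since $K$ stabilises every $N$-orbit), after which $|K|=|u^N|=|N|$ gives $K=N$; you should say this explicitly. Second, your motivating claim about the cycle case is factually off: for even $r$ the Klein four-group generated by two fixed-point-free reflections of $C_r$ (say $r=4$) is vertex-transitive but \emph{not} edge-transitive, and more generally no dihedral subgroup of order $r$ in $D_{2r}$ is simultaneously vertex- and edge-transitive, so the ``anomalous'' subgroups you worry about are already excluded by edge-transitivity of the induced group alone. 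This does not damage your proof --- your stabiliser/orientation analysis excludes them independently and in fact proves slightly more, pinning down which sub-case yields which group --- but the stated justification for why the extra care is needed is erroneous.
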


For a graph $\Gamma$ and a subgroup $G$ of $\Aut(\Ga)$, we say that $\Ga$ is \emph{$G$-locally-primitive} if $G_v^{\Ga(v)}$ is primitive for each $v\in V(\Gamma)$.

\begin{lemma}[{\cite[Theorem~4.1]{Praeger1993}}]\label{lem:quot}
Let $\Ga$ be a $G$-vertex-transitive and $G$-locally-primitive graph, and let $N\trianglelefteq G$ such that $N$ has at least three orbits on $V(\Ga)$. Then the following statements hold:
\begin{enumerate}[{\rm (a)}]
  \item $\Ga$ is a cover of $\Ga_N$.
  \item $N$ is semiregular on $V(\Ga)$ and is the kernel of $G$ acting on $V(\Ga_N)$.
  \item $\Ga_N$ is $(G/N)$-locally-primitive.
  \item If $\Ga$ is $(G,2)$-arc-transitive, then $\Ga_N$ is $(G/N,2)$-arc-transitive.
\end{enumerate}
\end{lemma}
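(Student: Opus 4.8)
The plan is to realise $\Ga_N$ through the $G$-invariant partition $\mathcal{B}$ of $V(\Ga)$ into the orbits (blocks) of $N$, and to extract all four conclusions from the interaction between this block system and the primitive local action $G_u^{\Ga(u)}$. First I would fix $u\in V(\Ga)$, write $B=u^N$ for its block, and consider the partition of the neighbourhood $\Ga(u)$ in which two neighbours are grouped together precisely when they lie in the same block of $\mathcal{B}$. Since $G_u$ fixes $u$ it fixes $B$ setwise and permutes the blocks, so this partition of $\Ga(u)$ is $G_u$-invariant; as $G_u^{\Ga(u)}$ is primitive (hence transitive), the partition must be one of the two trivial ones. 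Thus either (i) distinct neighbours of $u$ lie in distinct blocks, or (ii) all neighbours of $u$ lie in a single block.

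The crux is to exclude (ii) and to rule out $B$ being self-adjacent, both of which rest on the hypotheses $\val(\Ga)\geq2$ and that $N$ has at least three orbits. In case~(ii) the same conclusion holds at every vertex by $G$-vertex-transitivity and the $N$-invariance of neighbourhoods, so each block is adjacent in $\Ga_N$ to at most one block; using connectedness of $\Ga$ this forces $V(\Ga)$ to be covered by one or two blocks, contradicting the three-orbit hypothesis. Hence (i) holds. Moreover, if some neighbour of $u$ lay in $B$ itself, then—$B$ meeting $\Ga(u)$ in a single point and $G_u$ fixing $B$ setwise—the group $G_u$ would fix that neighbour, contradicting the fact that a transitive group on the $\geq2$ points of $\Ga(u)$ has no fixed point. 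Therefore the $\val(\Ga)$ neighbours of $u$ lie in $\val(\Ga)$ distinct blocks other than $B$, and since $N$ is transitive on $B$ every block adjacent to $B$ contains a neighbour of $u$; this yields a bijection $\Ga(u)\to\Ga_N(B)$ and proves conclusion~(a).

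For conclusion~(b) I would first note $N_u=N\cap G_u\trianglelefteq G_u$. In case~(i) the subgroup $N_u$ fixes each neighbour of $u$, because it preserves every block and each block meets $\Ga(u)$ in at most one point; propagating this along edges and using connectedness shows that $\Fix(N_u)=V(\Ga)$, whence $N_u=1$ and $N$ is semiregular. The same edge-propagation argument, applied to an element $g\in G$ that fixes every block setwise (after correcting $g$ by an element of $N$ so that it fixes $u$), shows such $g$ lies in $N$; hence $N$ is exactly the kernel of $G$ acting on $V(\Ga_N)$.

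Conclusions~(c) and~(d) then follow from the covering structure. Since $N$ is transitive on $B$ I would write the block stabiliser as $G_B=NG_u$, so that $(G/N)_B\cong G_u/N_u=G_u$, and the neighbourhood bijection $\Ga(u)\to\Ga_N(B)$ is equivariant for this identification; transporting the primitivity of $G_u^{\Ga(u)}$ across it gives that $\Ga_N$ is $(G/N)$-locally-primitive. For~(d), a $2$-arc of $\Ga_N$ lifts through the cover to a $2$-arc of $\Ga$—its two outer blocks pull back to two neighbours of a chosen central vertex, distinct by~(i)—and conversely every $2$-arc of $\Ga$ projects to a genuine $2$-arc of $\Ga_N$; applying $(G,2)$-arc-transitivity to a pair of lifts and projecting the connecting element to $G/N$ yields $(G/N,2)$-arc-transitivity. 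The main obstacle is the first step: pinning down that we are in the fixed-point-free, singleton-block regime~(i), which is exactly where local primitivity, the valency bound, and the three-orbit hypothesis must be combined; once~(i) is secured, semiregularity and the two quotient statements follow by routine covering and propagation arguments.
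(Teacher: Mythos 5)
Your proof is correct. The paper offers no proof of this lemma---it is imported verbatim as \cite[Theorem~4.1]{Praeger1993}---and your argument is essentially the standard normal-quotient proof from that source: local primitivity forces the block-induced partition of $\Ga(u)$ to be trivial, the three-orbit hypothesis (together with connectedness, which the paper's statement leaves implicit but which is genuinely needed and which you correctly invoke) eliminates the ``all neighbours in one block'' alternative and self-adjacency of blocks, and then the covering bijection $\Ga(u)\to\Ga_N(B)$, semiregularity of $N$, the kernel identification, and the transfer of local primitivity and $2$-arc-transitivity to $\Ga_N$ all follow by the propagation and lifting arguments exactly as you describe.
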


Let $G$ be a group, let $H$ be a subgroup of $G$, and let $D$ {be an inverse-closed} union of double cosets of $H$ in $G$ such that $\langle H,D\rangle=G$. The \emph{coset graph} $\Cos(G, H, D)$ is the graph with vertex set $[G : H] = \{Hx \mid x \in G\}$ such that $Hx$ and $Hy$ are adjacent if and only if $yx^{-1}\in D$.
The following is a well-known and easy-to-prove result linking vertex- and edge-transitive graphs to coset graphs, for which we refer the reader to~\cite[Lemma~2.1]{Li-Lu-Zhang-JCTB}. Note that, for each $G$-vertex-transitive graph $\Gamma$ and $v\in V(\Gamma)$, there exists $g\in G$ such that $\{v, v^g\}\in E(\Ga)$.

\begin{lemma}\label{lem:cosetgraph1}
Let $\Ga$ be a $G$-vertex-transitive and $G$-edge-transitive graph, let $v \in V(\Ga)$, and {let $g\in G$ be} such that $\{v, v^g\}\in E(\Ga)$. Then the following statements hold:
\begin{enumerate}[{\rm (a)}]
  \item $\Ga \cong \Cos(G, G_v, G_v\{g, g^{-1}\}G_v)$.
  \item $\Ga$ is $G$-arc-transitive if and only if $G_v\{g, g^{-1}\}G_v=G_vgG_v$.
  \item If $\Ga$ is $G$-arc-transitive, then $\Ga \cong \Cos(G, G_v, G_vhG_v)$ for some element $h$ of {order a power of $2$} in $G$ reversing an arc $(u,v)$.
\end{enumerate}
\end{lemma}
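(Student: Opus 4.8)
The plan is to build the isomorphism in (a) from the orbit--stabilizer correspondence and then read off (b) and (c) as consequences. First I would fix the bijection $\phi\colon V(\Ga)\to[G:G_v]$ given by $v^x\mapsto G_vx$; this is well defined and injective because $v^x=v^y$ is equivalent to $xy^{-1}\in G_v$, and it is surjective because $G$ is transitive on $V(\Ga)$. The core computation is to identify the neighbours of $v$ with a double-coset condition. Applying elements of $G$ to the given edge $\{v,v^g\}$, an edge $\{v^a,v^{ga}\}$ contains $v$ exactly when $a\in G_v$ or $ga\in G_v$; running $a$ through these two cases shows that the neighbours of $v$ are precisely $\{v^w\mid w\in gG_v\cup g^{-1}G_v\}$. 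Hence $\{v,v^z\}\in E(\Ga)$ if and only if $z\in G_vgG_v\cup G_vg^{-1}G_v=G_v\{g,g^{-1}\}G_v=:D$. Transporting a general edge $\{v^x,v^y\}$ by $x^{-1}$ to $\{v,v^{yx^{-1}}\}$ then gives $\{v^x,v^y\}\in E(\Ga)\iff yx^{-1}\in D$, which is exactly adjacency in $\Cos(G,G_v,D)$. I would also record in passing that $D$ is inverse-closed and disjoint from $G_v$ (so no loops arise), and that connectivity of $\Ga$ yields $\langle G_v,D\rangle=G$, so the coset graph is legitimately defined. This establishes (a).

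For (b) I would translate arc-transitivity into a statement about $G_v$-orbits on $\Ga(v)$. Since $G$ is already vertex- and edge-transitive, $\Ga$ is $G$-arc-transitive if and only if $G_v$ is transitive on $\Ga(v)$. Under $\phi$ the neighbours of $v$ correspond to the right cosets $G_vw\subseteq D$, and the right-multiplication action of $G_v$ on these cosets has orbits the double cosets contained in $D$. Because $D=G_vgG_v\cup G_vg^{-1}G_v$ is a union of at most two double cosets, $G_v$ is transitive on $\Ga(v)$ precisely when these coincide, i.e.\ when $D=G_vgG_v$; this is exactly (b).

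Part (c) is where the only real work lies, and I expect the $2$-power-order requirement to be the main obstacle. By (a) and (b), once $\Ga$ is $G$-arc-transitive we have $\Ga\cong\Cos(G,G_v,G_vgG_v)$; the task is to produce an arc-reversing element of $2$-power order and re-run (a) and (b) with it in place of $g$. Writing $u=v^g$, arc-transitivity forces the arcs $(u,v)$ and $(v,u)$ into one $G$-orbit, so some element swaps $u$ and $v$; thus the setwise stabiliser $G_{\{u,v\}}$ properly contains the pointwise stabiliser $G_u\cap G_v$ with index $2$. The plan is to choose $h$ inside a Sylow $2$-subgroup $P$ of $G_{\{u,v\}}$: since $[G_{\{u,v\}}:G_u\cap G_v]=2$, a counting of $2$-parts forces $P\not\le G_u\cap G_v$, so any $h\in P\setminus(G_u\cap G_v)$ swaps $u$ and $v$ (hence reverses the arc $(u,v)$) while having $2$-power order. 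Finally $\{v,v^h\}=\{v,u\}\in E(\Ga)$, so applying (a) and then (b) gives $\Ga\cong\Cos(G,G_v,G_vhG_v)$, completing the proof. The delicate point to get right is the index/Sylow bookkeeping guaranteeing $P\not\le G_u\cap G_v$, which is exactly what produces a $2$-element outside the pointwise stabiliser rather than merely some arc-reversing element of unspecified order.
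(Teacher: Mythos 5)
Your proof is correct: parts (a) and (b) via the orbit--stabilizer bijection $v^x\mapsto G_vx$ and the identification of $G_v$-orbits on $\Ga(v)$ with double cosets in $D$, and part (c) via a Sylow $2$-subgroup of $G_{\{u,v\}}$ (which contains $G_u\cap G_v$ with index $2$, forcing a $2$-element outside the pointwise stabilizer) are exactly the standard argument. The paper itself offers no proof, calling the lemma well-known and citing Li--Lu--Zhang, and your write-up is essentially the same approach as that reference, so there is nothing to fault beyond noting that your appeal to connectivity (to get $\langle G_v,D\rangle=G$) is an implicit hypothesis in how the lemma is applied rather than one stated in it.
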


A \emph{Haar graph} of a group $G$ is a bipartite graph whose automorphism group has a subgroup isomorphic to $G$ that is semiregular on the vertex set with orbits giving {the} bipartition.

\begin{lemma}[{\cite[Lemma~2.3]{DPZ}}]\label{lem:faithful}
Let $\Ga$ be a Haar graph of a group $G$, and let $N=\N_{\Aut(\Ga)}(G)$. Then for each $v\in V(\Ga)$, the stabilizer $N_v$ acts faithfully on $\Ga(v)$.
\end{lemma}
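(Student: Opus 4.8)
The plan is to realize $\Ga$ concretely as a Haar graph with an explicit connection set and to track how an element of $N_v$ that fixes $v$ is forced to act on each part of the bipartition. Fix the two $G$-orbits $U,W$ forming the bipartition and identify them with $G$ through the regular action of $G$, writing $U=\{u_g:g\in G\}$ and $W=\{w_g:g\in G\}$ with $u_g=u_1^{\,g}$ and $w_g=w_1^{\,g}$. Encode the adjacencies by the connection set $S=\{s\in G:u_1\sim w_s\}$, so that $u_g\sim w_h$ exactly when $hg^{-1}\in S$; after relabeling the base vertex of $W$ we may assume $1\in S$, i.e.\ $u_1\sim w_1$.

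First I would reduce to the vertex $v=u_1$. Since $G\leq N$ is transitive on $U$ (and on $W$) and stabilizers of vertices in one part are $N$-conjugate, faithfulness of $N_v$ on $\Ga(v)$ for one vertex in each part propagates to all of them, the case of a base vertex of $W$ being entirely symmetric. So take $n$ in the kernel $N_{u_1}^{[1]}$ of the action of $N_{u_1}$ on $\Ga(u_1)$, and aim to show $n=1$. Because $n$ fixes $u_1$ and normalizes $G$, it fixes the $G$-orbit $U=u_1^{\,G}$ setwise (hence also $W$), and conjugation by $n$ yields an automorphism $\phi\in\Aut(G)$ with $g^{\phi}=n^{-1}gn$. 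Writing $w_1^{\,n}=w_h$ for the appropriate $h\in G$, the identities $u_g=u_1^{\,g}$, $w_g=w_1^{\,g}$ and $gn=n\,g^{\phi}$ give at once
\[
u_g^{\,n}=u_{g^{\phi}},\qquad w_g^{\,n}=w_{h\,g^{\phi}}\qquad(g\in G).
\]

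Now $\Ga(u_1)=\{w_s:s\in S\}$, and $n\in N_{u_1}^{[1]}$ fixes it pointwise, so the second formula forces $h\,s^{\phi}=s$ for every $s\in S$. Taking $s=1\in S$ gives $h=1$, and then $s^{\phi}=s$ for all $s\in S$, i.e.\ $\phi$ fixes $S$ pointwise. Here connectedness of $\Ga$ enters decisively: for a connected Haar graph normalized so that $1\in S$ one has $\langle S\rangle=G$, whence $\phi$ fixes a generating set and $\phi=\mathrm{id}$; together with $h=1$ the displayed formulas give $u_g^{\,n}=u_g$ and $w_g^{\,n}=w_g$ for all $g$, so $n=1$.

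I expect the only genuine obstacle to be this last step, the passage from ``$\phi$ fixes $S$ pointwise'' to $\phi=\mathrm{id}$, which truly needs $\langle S\rangle=G$ and hence the connectedness of $\Ga$; everything else is bookkeeping with the two displayed formulas. This hypothesis is indispensable, and I would flag it as the implicit standing assumption of the cited source: the Haar graph of $\mz_2\times\mz_2$ with connection set $\{1,a\}$ is a disjoint union of two $4$-cycles, and the nontrivial automorphism that fixes the $4$-cycle through $u_1$ pointwise and acts as the half-turn on the other $4$-cycle normalizes $G$ while fixing $u_1$ together with both its neighbors, so $N_{u_1}$ fails to be faithful on $\Ga(u_1)$. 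Accordingly I would carry out the argument above under the connectedness assumption, for which the reasoning closes cleanly.
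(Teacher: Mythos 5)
Your proof is correct, and there is nothing internal to compare it against: the paper does not prove this lemma but quotes it from~\cite{DPZ}, and your argument is essentially the standard one behind that citation. Coordinatizing the two $G$-orbits by the regular action, extracting $\phi\in\Aut(G)$ from conjugation by $n\in N_{u_1}$, and deriving $u_g^{\,n}=u_{g^\phi}$, $w_g^{\,n}=w_{hg^\phi}$ are all verified correctly (with the convention $u_g\sim w_h\iff hg^{-1}\in S$), as is the normalization $1\in S$, the deduction $h=1$ and $s^\phi=s$ for $s\in S$, and the equivalence, given $1\in S$, of connectedness with $\langle S\rangle=G$. Your connectedness caveat is moreover a genuine and correct catch: the lemma as literally stated fails for disconnected Haar graphs, and your $\mathbb{Z}_2\times\mathbb{Z}_2$ example with $S=\{1,a\}$ is valid --- the half-turn on the second $4$-cycle normalizes the right-regular copy of $G$ (conjugation fixes $a$ and swaps $b$ with $ab$), fixes $u_1$ and both of its neighbours, yet is nontrivial, so $N_{u_1}$ is not faithful on $\Gamma(u_1)$. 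Note that the standing conventions of this paper (finite, undirected, regular, simple) do not include connectedness either, so the hypothesis is implicit here as well; this is harmless, since Lemma~\ref{lem:faithful} is applied only inside the proof of Lemma~\ref{lem:hat-2at}, in the setting of Theorem~\ref{Thm} where $\Gamma$ is connected, and the source~\cite{DPZ} likewise works with connected graphs.
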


\section{Characterization of maximal \texorpdfstring{$(\frac{1}{2}, t)$}{(1/2, t)}-pairs}\label{sec:proofofmain}

Recall that the purpose of Theorem~\ref{Thm} is to characterize maximal $(\frac{1}{2}, t)$-pairs of tetravalent graphs for $t\geq1$. In the following lemma, we first deal with the case $t\geq2$.

\begin{lemma}\label{lem:hat-2at}
Let $\Ga$ be a tetravalent graph, let $u\in V(\Ga)$, let $(M, H)$ be a maximal $(\frac{1}{2}, t)$-pair of $\Ga$ with $t\geq2$, and let $K=\mathrm{Core}_H(M)$. Then one of the following holds:
{
\begin{enumerate}[\rm (a)]
\item $t=3$, $(H/K, M/K, H_u, M_u)=(A_{72}, A_{71}, S_4\times S_3, \mz_2)$, and $\Ga$ is a cover of $\Ga_K$;
\item $t=2$, $(H/K, M/K, H_{u}, M_{u})=(S_5, \AGL_1(5), A_4, \mz_2)$, and $\Ga$ is a cover of $\Gamma_K\cong \K_{5, 5}-5\K_2$.
\end{enumerate}}
\end{lemma}

\begin{proof}
By our assumption, $\Ga$ is $M$-HAT and $(H, 2)$-arc-transitive, and $M$ is a maximal subgroup of $H$. Take an arbitrary $v\in V(\Ga)$. Then $H_v^{\Ga(v)}$ is $2$-transitive as $H$ is $2$-arc-transitive. Since $K\trianglelefteq H$, it follows that $K_v\trianglelefteq H_v$ and hence $K_v^{\Ga(v)}\trianglelefteq H_v^{\Ga(v)}$. Consequently, the normal subgroup $K_v^{\Ga(v)}$ of $H_v^{\Ga(v)}$ is either trivial or transitive. Since $K\leq M$ and $M$ is not arc-transitive, $K_v^{\Ga(v)}$ is not transitive. Thus, $K_v^{\Ga(v)}$ is trivial, which implies that $K_v=1$, and so $K$ is semiregular on $V(\Ga)$.

Suppose {first that} $K$ is transitive on $V(\Ga)$. Then $H=K\rtimes H_u$. Since $M$ is HAT, $M_u$ is a $2$-group. Let $P$ be a Sylow $2$-subgroup of $H_u$ containing $M_u$. Then $M=K\rtimes M_u\leq K\rtimes P\leq H$. {Note that each Sylow $2$-subgroup of a transitive permutation group of degree $4$ is again transitive. Since $H_u$ is transitive on $\Ga(u)$, we deduce that $P$ is transitive on $\Ga(u)$.} Since $M_u$ is not transitive on $\Ga(u)$, we deduce that $M_u<P$ and so $M=K\rtimes M_u<K\rtimes P$. Since $H_u^{\Ga(u)}$ is $2$-transitive, its order is divisible by $3$. Thus, $|H_u|$ is divisible by $3$, and so $P<H_u$, which implies that $K\rtimes P<K\rtimes H_u=H$. Now $M<K\rtimes P<H$, {contradicting the maximality} of $M$ in $H$.

Suppose {next that} $K$ has precisely two orbits on $V(\Ga)$. Then since $H$ is arc-transitive on $\Ga$, the two orbits of $K$ are both independent sets of $\Ga$. By Lemma~\ref{lem:faithful}, $H_u$ is faithful on $\Ga(u)$ and so is $M_u$. {Since $H_u$ is 2-transitive on $\Gamma(u)$ and $M_u$ has two orbits of length $2$ on $\Ga(u)$, we deduce that either $H_u=A_4$ and $M_u=\mz_2$, or $H_u=S_4$ and $M_u\leq\mz_2^2$.}
In particular, $|H_u|_2>|M_u|$.
{Since $M_u$ is a $2$-group, there exists a Sylow $2$-subgroup $P$ of $M$ containing $M_u$. Note that $|M:K|=2|M_u|$ is a power of $2$, so $M/K$ is a $2$-group and hence $PK=M$. In particular, {$PM/K=M/K$.} Since $M$ is vertex-transitive, $M/K$ acts nontrivially on the set of the two $K$-orbits, and so $P$ contains an element interchanging them. Let $Q$ be a Sylow $2$-subgroup of $H$ containing $P$. Then $M=KP\leq KQ$.
Since $|H|=2|K||H_u|$ and $|H_u|$ is divisible by $3$, we obtain $KQ<H$.
This forces $M=KQ$ by the maximality of $M$ in $H$. Hence $|M|_2=|KQ|_2=|H|_2$, contradicting $|H|_2/|M|_2\geq|H_u|_2/|M_u|_2>1$.}

Thus, we conclude that $K$ has at least three orbits on $V(\Ga)$. Since $H$ is $2$-arc-transitive on $\Ga$, it follows from Lemmas~\ref{lem:4HAT} and~\ref{lem:quot} that $\Ga_K$ is $(M/K)$-HAT and $(H/K,2)$-arc-transitive, and $\Ga$ is a cover of $\Ga_K$. In particular, $K$ is the kernel of $H$ acting on $V(\Ga_K)$.

Let $\overline{\phantom{w}}$ be the natural homomorphism from $H$ to $H/K$, let $\Omega$ be the set of right cosets of $\overline{M}$ in $\overline{H}$, and let $n=|\Omega|$. {Then, since $\overline{M}$ is maximal and core-free in $\overline{H}$, the group} $\overline{H}$ acts faithfully and primitively on $\Omega$, and we may view $\overline{H}$ as a subgroup of $S_n$.

Since $\overline{M}$ is transitive on $V(\Ga_K)$, it follows that $\overline{H}=\overline{M}\,\overline{H}_{\overline{u}}$, where $\overline{u}:=u^K$. Then the subgroup $\overline{H}_{\overline{u}}$ of $\overline{H}$ acts faithfully and transitively on $\Omega$ with stabilizer $\overline{M}\cap \overline{H}_{\overline{u}}=\overline{M}_{\overline{u}}$. This implies that $\overline{M}_{\overline{u}}$ is core-free in $\overline{H}_{\overline{u}}$. Since $\Ga_K$ is $\overline{M}$-HAT, there are two orbits of $\overline{M}_{\overline{u}}$ on $\Ga_K(\overline{u})$, each of size $2$. As a consequence, $2|\overline{M}_{\overline{u}}|\leq|\overline{H}_{\overline{u}}|_2$.
Take $v\in\Ga(u)$ and let $\overline{v}=v^K$. Then $\overline{v}\in\Ga_K(\overline{u})$, and the action of $\overline{H}_{\overline{u}}$ on $\Ga_K(\overline{u})$ is equivalent to the action of $\overline{H}_{\overline{u}}$ on the set $\Delta:=\{\overline{H}_{\overline{u}\,\overline{v}}\overline{h}\mid \overline{h}\in\overline{H}_{\overline{u}}\}$.
Hence $\overline{M}_{\overline{u}}$ lies in
\[
\mathcal{X}:=\{X\leq\overline{H}_{\overline{u}}\mid X\text{ is core-free in }\overline{H}_{\overline{u}},\ 2|X|\leq|\overline{H}_{\overline{u}}|_2,\ X\text{ has two orbits of size $2$ on }\Delta\}.
\]
Since $\overline{H}$ is $2$-arc-transitive on $\Ga_K$, by \cite[Theorem~4]{Primoz}, $(\overline{H}_{\overline{u}}, \overline{H}_{\overline{u}\,\overline{v}})$ is one of {the} pairs $(L, B)$ listed in Table~\ref{tab:amalgams}. 

\begin{table}[htbp]
\centering
\caption{Finite faithful $2$-transitive amalgams of index $(4,2)$}
\label{tab:amalgams}
\tiny
\renewcommand{\arraystretch}{1.2}
\begin{adjustbox}{width=\textwidth,center}
\begin{tabular}{>{\raggedright\arraybackslash}p{1.5cm}>{\raggedright\arraybackslash}p{12cm}}
\hline
\textbf{Name} & $G = L *_B R$ \\
\hline
$A_4 s$ & $G = \langle x, y, s, a \mid x^2, y^2, s^3, a^2, [x, y], x^s = y, y^s = xy, s^a = s^{-1} \rangle$ \\
& $L = \langle x, y, s \rangle \cong A_4$, $B = \langle s \rangle \cong \mz_3$, $R = \langle s, a \rangle \cong S_3$ \\[0.3em]
$A_4 x$ & $G = \langle x, y, s, a \mid x^2, y^2, s^3, a^2, [x, y], x^s = y, y^s = xy, [s, a] \rangle$ \\
& $L = \langle x, y, s \rangle \cong A_4$, $B = \langle s \rangle \cong \mz_3$, $R = \langle s, a \rangle \cong \mz_3 \times \mz_2$ \\[0.3em]
$S_4$ & $G = \langle x, y, s, t, a \mid x^2, y^2, s^3, t^2, a^2, [x, y], s^t = s^{-1}, x^s = y, y^s = xy, x^t = y, [s, a], [t, a] \rangle$ \\
& $L = \langle x, y, s, t \rangle \cong S_4$, $B = \langle s, t \rangle \cong S_3$, $R = \langle s, t, a \rangle \cong S_3 \times \mz_2$ \\[0.3em]
$\mz_3 \times A_4$ & $G = \langle x, y, c, d, a \mid x^2, y^2, c^3, d^3, a^2, [x, y], [c, d], [c, x], [c, y], x^d = y, y^d = xy, c^a = d \rangle$ \\
& $L = \langle x, y, c, d \rangle \cong \mz_3 \times A_4$, $B = \langle c, d \rangle \cong \mz_3^2$, $R = \langle c, d, a \rangle \cong \mz_3^2 \rtimes \mz_2$ \\[0.3em]
$\mz_3 \rtimes S_4$ & $G = \langle x, y, c, d, t, a \mid x^2, y^2, c^3, d^3, t^2, a^2, [x, y], [c, d], [c, x], [c, y], c^t = c^{-1}, d^t = d^{-1}, x^d = y, y^d = xy, x^t = y,c^a = d, [a, t] \rangle$ \\
& $L = \langle x, y, c, d, t \rangle \cong \mz_3 \rtimes S_4$, $B = \langle c, d, t \rangle \cong \mz_3^2 \rtimes \mz_2$, $R = \langle c, d, t, a \rangle \cong \mz_3^2 \rtimes \mz_2^2$ \\[0.3em]
$\mz_3 \rtimes S_4^*$ & $G = \langle x, y, c, d, t, a \mid x^2, y^2, c^3, d^3, t^2, a^2 = t, [x, y], [c, d], [c, x], [c, y], c^t = c^{-1}, d^t = d^{-1}, x^d = y, y^d = xy,x^t = y, c^a = d, d^a = c^{-1} \rangle$ \\
& $L = \langle x, y, c, d, t \rangle \cong \mz_3 \rtimes S_4$, $B = \langle c, d, t \rangle \cong \mz_3^2 \rtimes \mz_2$, $R = \langle c, d, t, a \rangle \cong \mz_3^2 \rtimes \mz_4$ \\[0.3em]
$S_3 \times S_4$ & $G = \langle x, y, c, d, r, s, a \mid x^2, y^2, c^3, d^3, r^2, s^2, a^2, [x, y], [c, d], [r, s], [c, x], [c, y], c^r = c^{-1}, [d, r], [c, s], d^s = d^{-1}, x^d = y, y^d = xy, x^s = y, [r, x], [r, y], c^a = d, s^a = r \rangle$ \\
& $L = \langle x, y, c, d, r, s \rangle \cong S_3 \times S_4$, $B = \langle c, d, r, s \rangle \cong S_3^2$, $R = \langle c, d, r, s, a \rangle \cong S_3^2 \rtimes \mz_2$ \\[0.3em]
4-AT & $G = \langle t, c, d, e, x, y, a \mid t^2, c^3, d^3, e^3, x^2, y^2, a^2, [c, d], [c, e], [d, e] = c,[x, y], (cx)^2, (dx)^2,$ $[e, x], (cy)^2, [d, y], (ey)^2, c^t = d^{-1}, y(et)^2 e^{-1} te^{-1}, (et)^4 x, (ca)^2, d^a = e, x^a = y \rangle$ \\ &$L = \langle t, x, y, c, d, e \rangle \cong \AGL_2(3)$, $B = \langle x, y, c, d, e \rangle \cong \mz_3^2 \rtimes D_6$, $R = \langle x, y, c, d, e, a \rangle \cong (\mz_3^2 \rtimes D_6) \rtimes \mz_2$ \\[0.3em]
7-AT & $G = \langle h, p, q, r, s, t, u, v, k, a \mid h^4, p^3, q^3, r^3, s^3, t^3, u^3, v^2, k^2, a^2, kh^2, [p, q], [p, r], [p, s], [p, t],$ $[p, u], [q, r], [q, s], [q, t], [q, u], [r, s], [r, t], [u, s], [s, t]p^{-1}, [u, r]q^{-1}, [t, u](qrsp^{-1})^{-1}, [k, v], (tk)^2,$ $(rk)^2, [p, k], (qk)^2, (sk)^2, [u, k], (tv)^2, [r, v], (pv)^2, (qv)^2, [s, v], (uv)^2, [p, h], q^h = q^{-1}r^{-1}, r^h = qr, s^h = pq^{-1}r^{-1}s^{-1}t^{-1},t^h = p^{-1}qr^{-1}s^{-1}t, (huv)^2, (hu)^3, p^a=q^{-1}, r^a=s^{-1},t^a=u^{-1}, [v, a], k^a = vk \rangle$ \\ &$L = \langle h, p, q, r, s, t, u, v, k \rangle\cong \mz_3^3.\AGL_2(3)$, $B = \langle p, q, r, s, t, u, v, k \rangle$, $R = \langle p, q, r, s, t, u, v, k, a \rangle$ \\[0.3em]
\hline
\end{tabular}
\end{adjustbox}
\end{table}

For each $(L,B)$ in Table~\ref{tab:amalgams} as a candidate for $(\overline{H}_{\overline{u}}, \overline{H}_{\overline{u}\,\overline{v}})$, we determine the set $\mathcal{X}$ by computation in \textsc{Magma}~\cite{MAGMA}.
Then for each $X\in\mathcal{X}$ as a candidate for $\overline{M}_{\overline{u}}$, we construct $\overline{H}_{\overline{u}}$ as the permutation group induced by the right multiplication action of $L$ on $[L:X]$, where $|L:X|=|\overline{H}_{\overline{u}}|/|\overline{M}_{\overline{u}}|=n$.

Since $\Ga_K$ is $\overline{H}$-arc-transitive, Lemma~\ref{lem:cosetgraph1} implies that $\Ga_K\cong\Cos(\overline{H}, \overline{H}_{\overline{u}}, \overline{H}_{\overline{u}}\overline{h}\overline{H}_{\overline{u}})$, where {$\overline{h}\in\overline{H}$} reverses the arc $(\overline{u},\overline{v})$. We apply \textsc{Magma}~\cite{MAGMA} to find all double cosets $\overline{H}_{\overline{u}}\overline{h}\overline{H}_{\overline{u}}$ with $\overline{h}\in\N_{S_n}(\overline{H}_{\overline{u}\,\overline{v}})$ such that $\overline{h}^2\in \overline{H}_{\overline{u}}$ and $\langle\overline{H}_{\overline{u}},\overline{h}\rangle$ is primitive on $[L:X]$.

Moreover, since $\Ga_K$ is $\overline{M}$-HAT, Lemma~\ref{lem:cosetgraph1} implies that $\Ga_K\cong$$\Cos(\overline{M}, \overline{M}_{\overline{u}}, \overline{M}_{\overline{u}}\{m,m^{-1}\}\overline{M}_{\overline{u}})$, where $m\in\overline{M}$ maps $\overline{u}$ to $\overline{v}$ but does not map $\overline{v}$ back to $\overline{u}$.
Then checking this condition for the candidates, computation in \textsc{Magma}~\cite{MAGMA} shows that, up to isomorphism, $(\overline{H},\overline{M},\overline{H}_{\overline{u}},\overline{M}_{\overline{u}})$ is one of
$(S_5, \AGL_1(5), A_4, \mz_2)$ or $(A_{72}, A_{71}, S_4\times S_3, \mz_2)$, where the former quadruple gives $\Gamma_K\cong \K_{5, 5}-5\K_2$ with $t=2$ while the latter quadruple gives $t=3$.
Note that $H_u\cong\overline{H}_{\overline{u}}$ and $M_u\cong\overline{M}_{\overline{u}}$ as $K$ is semiregular on $V(\Ga)$.
The proof is thus complete.
\end{proof}

We now investigate the {maximal $(\frac{1}{2}, 1)$-pairs of a graph $\Ga$} by the following two lemmas.

\begin{lemma}\label{lem:hat-1t}
Let $\Ga$ be {an} $H$-arc-transitive and $M$-half-arc-transitive graph such that $M<H$ and $M_u^{\Ga(u)}\trianglelefteq H_u^{\Ga(u)}$ for some $u\in V(\Ga)${. T}hen the set of the two $M$-orbits on $A(\Ga)$ is preserved by $H$. Moreover, if $M$ is a maximal subgroup of $H$, then $M$ is normal in $H$ {of index $2$}.
\end{lemma}

\begin{proof}
Let $O_1$ and $O_2$ be the two orbits of $M$ on $A(\Ga)$. For each $v\in V(\Ga)$, define a partition $\{P_1(v), P_2(v)\}$ of $\Ga(v)$ by letting $w\in P_i(v)$ if and only if $(v, w)\in O_i$, where $i=1,2$. Since $M$ is transitive on $V(\Ga)$ and $M_u^{\Ga(u)}\trianglelefteq H_u^{\Ga(u)}$, we have $M_v^{\Ga(v)}\trianglelefteq H_v^{\Ga(v)}$ for every $v\in V(\Ga)$, and so $\{P_1(v),P_2(v)\}$ is a block system of imprimitivity for $H_v^{\Ga(v)}$.

{
Take $h\in H$. To prove that $h$ preserves $\{O_1,O_2\}$, suppose otherwise.
Then there exist arcs $a_1,a_2\in O_1$ such that $a_1^h\in O_1$ and
$a_2^h\in O_2$. Since $O_1$ is an $M$-orbit on $A(\Ga)$, there exists
$m\in M$ such that $a_1^m=a_1^h$. Replacing $h$ by $hm^{-1}$, we may assume
that $a_1^h=a_1$. Note that this replacement does not affect the fact that
some arc in $O_1$ is mapped by $h$ to an arc in $O_2$, since $m^{-1}$ preserves
each $M$-orbit on $A(\Ga)$.}

{
Write $a_1=(u,v)$. Since $h$ fixes the arc $(u,v)$, it fixes both $u$ and $v$,
and hence preserves $P_i(u)$ and $P_i(v)$ for $i=1,2$. Since $\Gamma$ is connected, if an arc in $O_1$ is fixed by $h$, then every arc in $O_1$ must be mapped by $h$ to an arc in the same $M$-orbit. This contradicts the existence of an arc in $O_1$ mapped by $h$ to an arc in $O_2$. Therefore, $h$ preserves $\{O_1,O_2\}$.
}



Moreover, let $H^+$ be the kernel of the action of $H$ on $\{O_1,O_2\}$. Then $M\leq H^+$ and $|H:H^+|\leq 2$. Since $H$ is arc-transitive while $O_1\neq A(\Ga)$, we have $H^+<H$. The maximality of $M$ in $H$ then gives $M=H^+$, so $M\trianglelefteq H$ {of index $2$}.
\end{proof}

\begin{lemma}\label{lem:stab:=2}
Let $\Ga$ be a tetravalent graph, let $u\in V(\Ga)$, let $(M, H)$ be a maximal $(\frac{1}{2},1)$-pair of $\Ga$ such that $M_u^{\Ga(u)}\cong\mz_2$ and $H_u^{\Ga(u)}\cong D_8$, and let $K=\mathrm{Core}_H(M)$. Then $M_u^{\Ga(u)}\ntrianglelefteq H_u^{\Ga(u)}$, $K$ is semiregular on $V(\Ga)$, the order {$|V(\Ga_K)|$ is not a power of $2$, and one of the following holds:
\begin{enumerate}[\rm (a)]
\item\label{lem:stab:=2-a} $K$ is the kernel of $H$ acting on $V(\Ga_K)$, $\Ga$ is a cover of $\Ga_K$, and the pair $(M/K, H/K)$ is a maximal $(\frac{1}{2}, 1)$-pair of $\Ga_K$;
\item\label{lem:stab:=2-b} $K$ is the kernel of $M$ acting on $V(\Ga_K)$, $\Ga_K\cong C_r$, and $M/K\cong D_{2r}$.
\end{enumerate}}
\end{lemma}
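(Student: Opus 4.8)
The plan is to prove the three assertions of the lemma --- the local non-normality $M_u^{\Ga(u)}\ntrianglelefteq H_u^{\Ga(u)}$, the semiregularity of $K$ together with $|V(\Ga_K)|\neq2^\ell$, and the dichotomy \eqref{lem:stab:=2-a}--\eqref{lem:stab:=2-b} --- in this order, since each feeds the next. I will use throughout the standard fact that a vertex stabiliser of a tetravalent arc-transitive graph whose local group is the $2$-group $D_8$ is itself a $2$-group, so that $H_u$ is a $2$-group.

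The main step is to rule out $M_u^{\Ga(u)}\trianglelefteq H_u^{\Ga(u)}$. Suppose otherwise; then $M_u^{\Ga(u)}$ is the unique normal subgroup of order $2$ of $D_8$, namely the centre $Z(D_8)$, whose two orbits of size $2$ on $\Ga(u)$ coincide with the $D_8$-block system. Since $M$ is a maximal half-arc-transitive subgroup of $H$ (any half-arc-transitive subgroup properly containing $M$ would equal $H$, which is arc-transitive) and $M_u^{\Ga(u)}\trianglelefteq H_u^{\Ga(u)}$, Lemma~\ref{lem:hat-1t} gives $|H:M|=|H_u^{\Ga(u)}:M_u^{\Ga(u)}|=4$. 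Maximality of $M$ then makes $H$ act primitively on $[H:M]$, so $H/K$ is a primitive group of degree $4$, forcing $(H/K,M/K)\in\{(A_4,\mz_3),(S_4,S_3)\}$. As $H_u$ is a $2$-group while $|H/K|\in\{12,24\}$ is not a $2$-power, $K$ can be neither transitive nor have exactly two orbits on $V(\Ga)$ (in both situations $|H:K|$ would be forced to be a $2$-power). Hence $K$ has at least three orbits, and Lemma~\ref{lem:4HAT} applied to $K\trianglelefteq M$ leaves two possibilities. The cover case makes $\Ga_K$ be $(M/K)$-half-arc-transitive, impossible since a group of order $3$ or $6$ cannot act half-arc-transitively on a tetravalent graph. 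The cycle case forces $\Ga_K\cong C_3$; here one checks that $H_u^{\Ga(u)}\cong D_8$ must interchange the two $C_3$-neighbours of the vertex carrying $u$, whereas $M_u^{\Ga(u)}=Z(D_8)$ fixes them, and chasing stabilisers of a vertex of $C_3$ in $M/K$ yields a numerical contradiction. This eliminates the normal case.

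With non-normality established, $M_u^{\Ga(u)}\cong\mz_2$ is now a non-central (reflection) subgroup of $D_8$, whose only normal-in-$D_8$ subgroup is trivial; since $K\trianglelefteq H$ gives $K_u^{\Ga(u)}\trianglelefteq H_u^{\Ga(u)}$ with $K_u^{\Ga(u)}\le M_u^{\Ga(u)}$, we get $K_u^{\Ga(u)}=1$, and propagating this fixed-point condition along the connected $\Ga$ yields $K_u=1$, so $K$ is semiregular. I would then prove $|V(\Ga_K)|\neq2^\ell$ uniformly: semiregularity gives $|V(\Ga_K)|=|H:K|/|H_u|$, so if this were a $2$-power then $|H:K|$ would be a $2$-power (again $H_u$ is a $2$-group), making $H/K$ a $2$-group; but then the maximal subgroup $M/K$ would be normal in $H/K$, giving $M\trianglelefteq H$ and hence $M_u^{\Ga(u)}\trianglelefteq H_u^{\Ga(u)}$, contradicting the previous paragraph. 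In particular $|V(\Ga_K)|\ge3$, so $K$ has at least three orbits. Applying Lemma~\ref{lem:4HAT} to $K$ now splits into the two cases. In the cover case, $\Ga$ covers $\Ga_K$, which is $(M/K)$-half-arc-transitive with $K$ the kernel of $M$ on $V(\Ga_K)$; to reach \eqref{lem:stab:=2-a} I would upgrade this by showing $K$ is the kernel of $H$ (not merely $M$): the covering property forces that kernel $L$ to fix each neighbour of each vertex, whence $L$ is semiregular and, being transitive on each $K$-orbit, equals $K$. Then $M/K$ is maximal in $H/K$, and $\Ga_K$ is $(H/K)$-arc-transitive but not $(H/K,2)$-arc-transitive (the local group is still $D_8$), so $(M/K,H/K)$ is a maximal $(\tfrac12,1)$-pair of $\Ga_K$. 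In the cycle case $\Ga_K\cong C_r$ with $M$ inducing $\mz_r$ or $D_{2r}$; since the $D_8$-block system is the unique $H_u$-invariant partition separating the two $K$-orbits neighbouring that of $u$, and the non-central reflection $M_u^{\Ga(u)}$ has orbits distinct from that block system, $M_u^{\Ga(u)}$ interchanges those two neighbouring orbits, so $M$ contains a reflection of $C_r$ and induces $D_{2r}$; faithfulness of $D_{2r}$ on $C_r$ identifies $K$ as the kernel of $M$ on $V(\Ga_K)$, giving \eqref{lem:stab:=2-b}.

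The hard part will be the second paragraph: because $Z(D_8)$ has exactly the orbit shape of a half-arc-transitive local group, the impossibility of $M_u^{\Ga(u)}\trianglelefteq H_u^{\Ga(u)}$ cannot be read off locally and must be extracted globally through the primitive degree-$4$ quotient, Lemma~\ref{lem:4HAT}, and the $2$-group property of the stabilisers, with the $\Ga_K\cong C_3$ sub-case needing the careful bookkeeping indicated above. Once non-normality is secured, the semiregularity of $K$, the exclusion of $2$-power quotients, and the cover/cycle dichotomy follow fairly directly from Lemmas~\ref{lem:hat-1t} and~\ref{lem:4HAT}.
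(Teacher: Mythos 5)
Your overall skeleton (non-normality first, then semiregularity of $K$, then the $2$-power exclusion, then the dichotomy via Lemma~\ref{lem:4HAT}) matches the paper, and most of it is sound --- though your route to non-normality is genuinely different and much longer than the paper's: where you go global (Lemma~\ref{lem:hat-1t}(c) gives $|H:M|=4$, primitivity pins $(H/K,M/K)$ down to $(A_4,\mz_3)$ or $(S_4,S_3)$, then a case analysis through Lemma~\ref{lem:4HAT}), the paper stays local: if $M_v^{\Ga(v)}\trianglelefteq H_v^{\Ga(v)}$ then Lemma~\ref{lem:hat-1t}(b) forces $H_v^{[1]}=M_v^{[1]}=1$, so $M_v\cong\mz_2$ is the centre of $H_v\cong D_8$, and the Klein subgroup $\langle M_v,h\rangle$ generated by the centre and a diagonal reflection stabilizes each $M_v$-orbit on $\Ga(v)$, contradicting Lemma~\ref{lem:hat-1t}(a). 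Your global detour appears to be repairable, but it is doing with a page what the paper does in three lines.

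The genuine gap is in the cycle case, i.e.\ case~(b). There you must show that $K$ equals the kernel $N$ of $M$ acting on $V(\Ga_K)$, and your justification --- ``faithfulness of $D_{2r}$ on $C_r$ identifies $K$ as the kernel'' --- is circular: the induced group $M/N$ is faithful on $V(\Ga_K)$ by the very definition of $N$, and nothing in Lemma~\ref{lem:4HAT}(b) (unlike part (a)) says the kernel equals the normal subgroup you quotiented by. A priori $N$ can be strictly larger than $K$. Concretely, writing $N=K\rtimes N_u$ (Frattini, using semiregularity of $K$), your block-swap argument shows $N_u\le M_u^{[1]}$, but conversely every element of $M_u^{[1]}$ fixes three consecutive vertices of $\Ga_K\cong C_r$ and hence lies in $N$; so what you actually get is $N=K\rtimes M_u^{[1]}$, and $N=K$ holds \emph{if and only if} $M_u^{[1]}=1$. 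Nowhere in your proposal do you prove $M_u^{[1]}=1$ (equivalently $M_u\cong\mz_2$); this is precisely the opening step of the paper's proof, where the fixed-point-freeness of the nontrivial element of $M_v^{\Ga(v)}\cong\mz_2$ on $\Ga(v)$ is propagated along the connected graph to kill $M_u^{[1]}$. (The paper then secures $N=K$ by a different, maximality-based argument: $G\nleq M$ forces $M<MG=H$, and the count $|M/N|=r|M_u|/|N_u|=|H/G|/|N_u|$ forces $N_u=1$.) So as written, conclusion (b) --- both ``$K$ is the kernel of $M$ on $V(\Ga_K)$'' and ``$M/K\cong D_{2r}$'' --- is not established; supplying the missing connectivity argument for $M_u^{[1]}=1$ would close the gap.
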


\begin{proof}
Since $\Ga$ is $M$-HAT and tetravalent, $M_u$ has two orbits of size $2$ on $\Ga(u)$, and so $M_u^{\Ga(u)}\cong\mz_2$ is semiregular on $\Ga(u)$. Since $M$ is transitive on $V(\Ga)$, $M_v^{\Ga(v)}$ is semiregular on $\Ga(v)$ for every $v\in V(\Ga)$. It follows that each element of $M_v^{[1]}$ fixes all neighbors of $v$, and hence lies in $M_w^{[1]}$ for each $w\in\Ga(v)$. By the connectedness of $\Ga$, $M_u^{[1]}=1$, and consequently $M_u\cong\mz_2$.

Suppose that $M_u^{\Ga(u)}\trianglelefteq H_u^{\Ga(u)}$. By Lemma~\ref{lem:hat-1t}, $M$ is normal in $H$ with $|H:M|=2$. Then $|H_u:M_u|=2$, but it follows from $|H_u|\geq 8$ and $|M_u|=2$ that $|H_u:M_u|\geq 4$, a contradiction.

Thereby we conclude that $M_v^{\Ga(v)}\ntrianglelefteq H_v^{\Ga(v)}$ for each $v\in V(\Ga)$. If $K_v\neq 1$, then it follows from $K_v\leq M_v\cong\mz_2$ that $M_v=K_v$ is normal in $H_v$, a contradiction. Consequently, $K_v=1$ for each $v\in V(\Ga)$, which means that $K$ is semiregular on $V(\Ga)$.

Since $K$ is semiregular on $V(\Ga)$, letting $r$ denote the number of orbits of $K$ on $V(\Ga)$, we have $|V(\Ga)|=r|K|$. The vertex-transitivity of $H$ and $M$ then gives $|H/K|=r|H_u|$ and $|M/K|=2r$.
{Note that the assumption $H_u^{\Ga(u)}\cong D_8$ implies} that $H_u$ is a $2$-group of order at least $8$.
If $r$ is a power of $2$, then $|H/K|=r|H_u|$ is a power of $2$, and $|H/K:M/K|=|H_u|/2\geq4$, whence $M/K$ is not maximal in the $2$-group $H/K$, contradicting the maximality of $M$ in $H$. Therefore, $r$ is not a power of $2$.

Now as $r\geq3$, we conclude from Lemma~\ref{lem:4HAT} that either case~\eqref{lem:stab:=2-a} of Lemma~\ref{lem:stab:=2} holds, or $\Ga_K\cong C_r$. To complete the proof, assume that $\Ga_K\cong C_r$. Let $G$ be the kernel of $H$ acting on $V(\Ga_K)$. Then $H/G\cong D_{2r}$ as $\Ga_K$ is $(H/G)$-arc-transitive. Let $N=M\cap G$ be the kernel of $M$ acting on $V(\Ga_K)$. Then $K\leq N$ and $MG/G\cong M/N$.
If $G\leq M$, then $N=G$, and $M/G$ is a transitive subgroup of $D_{2r}$, whence $|H/G:M/G|\leq 2$. Consequently, $|H:M|=|H/G:M/G|\leq 2$, which gives $|H_u:M_u|\leq 2$, contradicting $|H_u|\geq 8$ and $|M_u|=2$.
Thus, $G\nleq M$ and hence $M<MG$. The maximality of $M$ in $H$ then gives $MG=H$. It follows that $M/N\cong MG/G=H/G\cong D_{2r}$, and so $|M/N|=2r$.
Recalling that $|M|=2r|K|$, we obtain $|N|=|M|/|M/N|=|K|$. As $K\leq N$, this gives $N=K$; that is, $K$ is the kernel of $M$ acting on $V(\Ga_K)$, and $M/K\cong D_{2r}$, leading to case~\eqref{lem:stab:=2-b} of Lemma~\ref{lem:stab:=2}.

\end{proof}

\section{Examples}\label{sec:examples}

We start with an example for case~\eqref{Thmcase:t=1_cover} of Theorem~\ref{Thm} with $K=1$. This example also proves Corollary~\ref{cor-1} and shows that the answer to Question~\ref{que:Sparl} is negative.

\begin{example}\label{exam:hat_in_1-trans_altgra}
Let $L=\langle a,b,c\mid a^2=b^3=c^4=(ab)^8=1,\,c=[a,b]\rangle\cong\mathrm{PGL}_2(7)$, and let
\[
{H}=(L\times L)\rtimes\langle d\rangle\cong\mathrm{PGL}_2(7)\wr S_2,
\]
where $d$ interchanges the coordinates of elements in $L\times L$. Since $c$ is an element of order $4$ in $L'\cong\PSL_2(7)$, we have $\N_{L'}(\langle c\rangle)\cong D_8$. Let
\[
Y=(\N_{L'}(\langle c\rangle)\times\N_{L'}(\langle c\rangle))\rtimes\langle d\rangle\cong D_8\wr S_2,\quad x=(bc^2bc,a^b)
\]
and $\Gamma=\mathrm{Cos}(H,Y,YxY)$.

We first claim that $\Gamma$ is a normal-edge-transitive non-normal Cayley graph of $\mathrm{AGL}_1(7)\times\mathrm{AGL}_1(7)$ of valency $4$. In fact, letting
\[
G=\langle abc,c[b,c]\rangle\times\langle abc,c[b,c]\rangle\cong\mathrm{AGL}_1(7)\times\mathrm{AGL}_1(7),
\]
where $\langle abc,c[b,c]\rangle=\langle c[b,c]\rangle\rtimes\langle abc\rangle\cong\mathrm{AGL}_1(7)$, one has $|G\cap\mathrm{Soc}(H)|=(7\cdot3)^2$. Since $G\cap Y\leq\mathrm{Soc}(H)$ and $|Y|$ is a power of $2$, it follows that $G\cap Y=1$.
Then since $|H|=|G||Y|$, the mapping $\varphi\colon g\mapsto Yg$ is a bijection from $G$ to $[H:Y]$. Moreover, it is straightforward to verify that
\[
S:=\{x,x^{-1},x^d,(x^d)^{-1}\}=G\cap(YxY).
\]
Hence $\varphi$ is a graph isomorphism from $\mathrm{Cay}(G,S)$ to $\Gamma$. Note that the normalizer
\[
M:=\N_H(G)=G\rtimes\langle d\rangle
\]
is edge-transitive and is a maximal subgroup of $H$. Hence $\Gamma\cong\mathrm{Cay}(G,S)$ is a normal-edge-transitive non-normal Cayley graph of valency $4$ on $G$, as claimed.

We now describe $\Alt_M(\Gamma)$. The two $M$-alternating cycles $C_-$ and $C_+$ on $\Gamma$ containing $1\in G$ have vertex sets $\langle x^{-1}x^d\rangle\cup x\langle x^{-1}x^d\rangle$ and $\langle x^dx^{-1}\rangle\cup (x^d)^{-1}\langle x^dx^{-1}\rangle$ respectively. Note that each alternating cycle is determined by its vertex set. Then $V(\Alt_M(\Gamma))=\{C_+^m\mid m\in M\}$.
By computation in~\textsc{Magma}~\cite{MAGMA}, $C_-\cap C_+=\{1_G\}$. Hence $\att_M(\Ga)=1$, and so $\Ga_{\mathcal{B}(M)}=\Ga$ is arc-transitive. In particular,
\[
E(\Alt_M(\Gamma))=\{\{C_1,C_2\}\mid C_1,C_2\in V(\Alt_M(\Gamma)),\, C_1\cap C_2\neq \emptyset\}
\]
can be identified as $G$.

Finally, computation in~\textsc{Magma}~\cite{MAGMA} directly verifies that $\Aut(\Gamma)=H$ and $\Aut(\Alt_M(\Gamma))=M$. Therefore, $\Alt_M(\Ga)$ is half-arc-transitive.
\qed
\end{example}

In the same vein, we give an example for case~\eqref{Thmcase:t=3} of Theorem~\ref{Thm}, which also proves Corollary~\ref{cor-2}.

\begin{example}\label{exam:hat_in_3-trans_non-normal}

Let
\[X=\langle a,b\mid a^2=b^3=(ab)^2=1\rangle\times\langle c,d\mid c^2=d^3=(cd)^4=1\rangle\cong S_3\times S_4,\]
and consider the faithful action $R$ of $X$ on $\Omega := [X : \langle a(cd)^2\rangle]$ by right multiplication. Let $H = \mathrm{Alt}(\Omega) \cong A_{72}$, and let $H_\omega$ be the stabilizer in $H$ of the point $\omega = \langle a(cd)^2\rangle$. Define $Y = R(X' \rtimes \langle ac\rangle) \cong \mz_3 \rtimes S_4$; then $Y$ is a regular subgroup of $H$. Further, let $Z = R((\langle b\rangle \times \langle d\rangle) \rtimes \langle ac^{dc}\rangle) \cong \mz_3^2 \rtimes \mz_2$.

A search using \textsc{Magma}~\cite{MAGMA} reveals an element $x \in N_H(Z) \cap C_H(R(ac^{dc}))$ of order $4$ satisfying the following conditions:
\begin{itemize}
\item $x^2 = R(ac^{dc}) \in Z$;
\item $Y \cap Y^x = Z$;
\item $H = \langle Y, x \rangle$;
\item $Y x Y = Y S$ for some $S = \{g, g^{-1}, g^h, (g^h)^{-1}\}$, where $g \in H_\omega$ and {$h=R(a(cd)^2)$}.
\end{itemize}

{
Therefore, following the same approach as in Example~\ref{exam:hat_in_1-trans_altgra}, the mapping $\varphi:g\mapsto Yg$ from $H_\omega$ to $[H:Y]$ gives a graph isomorphism from $\mathrm{Cay}(H_\omega,S)$ to $\Gamma=\mathrm{Cos}(H, Y, YxY)$. Note that the conjugation by $h$ lies in $\Aut(H_\omega,S)$. This implies that $\Ga$ is a connected tetravalent normal-edge-transitive non-normal Cayley graph on $H_\omega\cong A_{71}$.
Moreover, $\Ga$ is a tetravalent $(H, 3)$-arc-transitive graph with vertex-stabilizer $Y\cong \mz_3\rtimes S_4$.}

Furthermore, let $\Sigma=\Cos(H, R(X), R(X)xR(X))$. Then $\Sigma$ is {an} $(H, 3)$-arc-transitive graph with vertex-stabilizer $R(X)\cong S_3\times S_4$ and $H_\omega$-HAT with vertex-stabilizer $R(\langle a(cd)^2\rangle)\cong \mz_2$.
\qed
\end{example}

Below we give {an example for} part (\ref{Thmcase:t=2}) of Theorem~\ref{Thm}.

\begin{example}\label{exam:hat_in_2-trans}
Let $\Ga=\K_{5, 5}-5\K_2$. Then $\Aut(\Ga)\cong S_5\times\mz_2$ has a subgroup $H\cong S_5$ such that $\Ga$ is $(H, 2)$-arc-transitive with {vertex-stabilizers} isomorphic to $A_4$, and $H$ has a {core-free maximal subgroup $M\cong\AGL(1,5)$} such that $\Ga$ is $M$-HAT with {vertex-stabilizers} isomorphic to $\mz_2$.
\qed
\end{example}

Finally, the following is an example for case~\eqref{Thmcase:t=1_cycle} of Theorem~\ref{Thm} {by taking $(M,H)=(\N_{\Aut(\Ga)}(R_G(G)),\Aut(\Ga))$.}

\begin{example}\label{exam:hat_in_1-trans_cycle}
Let $\Ga=\Cay(G,S)$ with
\begin{align*}
G=\langle a,b,c,d\mid\,&a^9=b^3=c^3=d^3=[[b,c],b]=[[b,c],c]=[[b,c],d]=1,\\
&a^{-1}ba=c,\; a^{-1}ca=d,\; a^{-1}da=b[c,d]\rangle
\end{align*}
and $S=\{ab, ab^{-1}, (ab)^{-1}, (ab^{-1})^{-1}\}$. According to~\textsc{Magma}~\cite{MAGMA} computation, $|G|=3^8$, $\Aut(\Ga)_1\cong D_8$, and $\N_{\Aut(\Ga)}(R_G(G))\cong R_G(G)\rtimes\mz_2$ is maximal in $\Aut(\Ga)$. Moreover, $\Ga$ is $(\Aut(\Ga), 1)$-transitive and $\N_{\Aut(\Ga)}(R_G(G))$-HAT. Let $K$ be the core of $\N_{\Aut(\Ga)}(R_G(G))$ in $\Aut(\Ga)$. Then $|R_G(G): K|=3$ and $\Ga_K\cong C_3$.
\qed
\end{example}

\section{Proof of main results}\label{sec:sum}

Corollaries~\ref{cor-1} and~\ref{cor-2} follow from Examples~\ref{exam:hat_in_1-trans_altgra} and~\ref{exam:hat_in_3-trans_non-normal}, respectively. Now we prove Theorem~\ref{Thm}.
Let $\Ga$ be a tetravalent graph, let $(M, H)$ be a maximal $(\frac{1}{2}, t)$-pair of $\Ga$ with $t\geq1$, let $K=\mathrm{Core}_H(M)$, and let $u\in V(\Ga)$.
If $t\geq2$, then Lemma~\ref{lem:hat-2at} asserts that~\eqref{Thmcase:t=3} or~\eqref{Thmcase:t=2} holds.
Assume that $t=1$. If $M_u^{\Ga(u)}\cong\mz_2$ and $H_u^{\Ga(u)}\cong D_8$, then by Lemma~\ref{lem:stab:=2}, {$M_u^{\Ga(u)}\ntrianglelefteq H_u^{\Ga(u)}$, $K$ is semiregular on $V(\Ga)$, $|V(\Ga_K)|$ is not a power of 2,} and we have~\eqref{Thmcase:t=1_cover} or~\eqref{Thmcase:t=1_cycle}. If either $M_u^{\Ga(u)}\ncong\mz_2$ or $H_u^{\Ga(u)}\ncong D_8$, then since $(M, H)$ is a $(\frac{1}{2}, 1)$-pair of $\Ga$, it follows that $M_u^{\Ga(u)}$ is normal in $H_u^{\Ga(u)}$ with $|H_u^{\Ga(u)}:M_u^{\Ga(u)}|=2$. In this case, Lemma~\ref{lem:hat-1t} implies that $M$ is normal in $H$, as in case~\eqref{Thmcase:t=1} of Theorem~\ref{Thm}. Moreover, according to Examples~\ref{exam:hat_in_1-trans_altgra},~\ref{exam:hat_in_3-trans_non-normal},~\ref{exam:hat_in_2-trans} and~\ref{exam:hat_in_1-trans_cycle}, examples for each of the cases~\eqref{Thmcase:t=3},~\eqref{Thmcase:t=2},~\eqref{Thmcase:t=1_cover} and~\eqref{Thmcase:t=1_cycle} of Theorem~\ref{Thm} exist. This completes the proof.

\section*{Acknowledgments}

This work was supported by the National Natural Science Foundation of China (12425111, 12331013, 12161141005).
The authors would like to thank Primo\v z \v Sparl for a useful discussion on the topic. The authors also thank the anonymous referees for their helpful comments and suggestions.

\section*{Appendix}

\tiny
\section*{Code for Lemma~3.1}

\begin{verbatim}
// For each 2-arc-transitive amalgam (Hu, Huv) from Table 1, we search for
// tetravalent graphs Gamma with a maximal (1/2, t)-pair (M, H), t >= 2.
// Step 1: Find candidate HAT stabilizers Mu < Hu.
// Step 2: Find arc-reversing elements h in N_{Sym}(Huv).
// Step 3: Build H = <Hu, h> and check primitivity.
// Step 4: Verify M = Stab_H(1) admits a forward element.

// ---- Amalgam data (Table 1) ----
Amalgams := [];
L<x,y,s> := Group<x,y,s | x^2, y^2, s^3, (x,y), x^s*y^-1, y^s*(x*y)^-1>;
Append(~Amalgams, <L, sub<L|s>>);                // [1] A4
L<x,y,s,t> := Group<x,y,s,t | x^2, y^2, s^3, t^2, (x,y), s^t*s, x^s*y^-1, y^s*(x*y)^-1, x^t*y^-1>;
Append(~Amalgams, <L, sub<L|s,t>>);              // [2] S4
L<x,y,c,d> := Group<x,y,c,d | x^2, y^2, c^3, d^3, (x,y), (c,d), (c,x), (c,y), x^d*y^-1, y^d*(x*y)^-1>;
Append(~Amalgams, <L, sub<L|c,d>>);              // [3] C3 x A4
L<x,y,c,d,t> := Group<x,y,c,d,t | x^2, y^2, c^3, d^3, t^2, (x,y), (c,d), (c,x), (c,y), c^t*c, d^t*d,
    x^d*y^-1, y^d*(x*y)^-1, x^t*y^-1>;
Append(~Amalgams, <L, sub<L|c,d,t>>);            // [4] C3 : S4
L<x,y,c,d,r,s> := Group<x,y,c,d,r,s | x^2, y^2, c^3, d^3, r^2, s^2, (x,y), (c,d), (r,s), (c,x), (c,y), c^r*c,
    (d,r), (c,s), d^s*d, x^d*y^-1, y^d*(x*y)^-1, x^s*y^-1, (r,x), (r,y)>;
Append(~Amalgams, <L, sub<L|c,d,r,s>>);          // [5] S3 x S4
L<t,x,y,c,d,e> := Group<t,x,y,c,d,e | t^2, c^3, d^3, e^3, x^2, y^2, (c,d), (c,e), (d,e)*c^-1, (x,y), (c*x)^2,
    (d*x)^2, (e,x), (c*y)^2, (d,y), (e*y)^2, c^t*d, y*(e*t)^2*e^-1*t*e^-1, (e*t)^4*x>;
Append(~Amalgams, <L, sub<L|x,y,c,d,e>>);        // [6] 4-AT
L<h,p,q,r,s,t,u,v,k> := Group<h,p,q,r,s,t,u,v,k |
    h^4, p^3, q^3, r^3, s^3, t^3, u^3, v^2, k^2, k*h^2, (p,q),(p,r),(p,s),(p,t),(p,u),(q,r),(q,s),(q,t),(q,u),
    (r,s),(r,t),(u,s), (s,t)*p^-1, (u,r)*q^-1, (t,u)*(q*r*s*p^-1)^-1, (k,v),(t*k)^2,(r*k)^2,(p,k),(q*k)^2,(s*k)^2,(u,k),
    (t*v)^2,(r,v),(p*v)^2,(q*v)^2,(s,v),(u*v)^2, (p,h), q^h*(q^-1*r)^-1, r^h*(q*r)^-1, s^h*(p*q^-1*r^-1*s^-1*t^-1)^-1,
    t^h*(p^-1*q*r^-1*s^-1*t)^-1, (h*u*v)^2, (h*u)^3>;
Append(~Amalgams, <L, sub<L|p,q,r,s,t,u,v,k>>);  // [7] 7-AT

// ---- Step 1: Find HAT stabilizer candidates ----
// Returns subgroups Mu of Hu with two orbits of size 2 on
// Gamma(u) = [Hu : Huv] and core-free in Hu.
function FindHATStabilizers(Hu, Huv)
    f := CosetAction(Hu, Huv);
    cands := [];
    for s in Subgroups(Hu : OrderDividing := #Sylow(Hu,2) div 2) do
        Mu := s`subgroup;
        O := Orbits(f(Mu));
        if #O ne 2 or #O[1] ne 2 then continue; end if;
        _, _, K := CosetAction(Hu, Mu);
        if #K ne 1 then continue; end if;
        Append(~cands, Mu);
    end for;
    return cands;
end function;

// ---- Step 2: Find arc-reversing element candidates ----
// Returns elements h in N_{Sym}(Huv) with 2-power order,
// h not in Hu, h^2 in Hu.
function FindArcReversors(Hu_perm, Huv_perm)
    Nuv := SymmetricNormalizer(Huv_perm);
    return [n : n in Nuv | IsPowerOf(Order(n), 2) and n notin Hu_perm and n^2 in Hu_perm];
end function;

// ---- Step 3: Check forward element condition ----
// m maps u to v but does NOT map v back to u.
function IsForwardElement(m, Mu_perm)
    T := Transversal(Mu_perm, Mu_perm meet Mu_perm^m);
    return not exists{t : t in T | m*t*m in Mu_perm};
end function;

// ---- Main search ----
function SearchPairs(Hu, Huv)
    results := [];
    for Mu in FindHATStabilizers(Hu, Huv) do
        phi := CosetAction(Hu, Mu);
        Hu_p := phi(Hu); Huv_p := phi(Huv); Mu_p := phi(Mu);
        Omega := Sym(GSet(Hu_p));
        for h in FindArcReversors(Hu_p, Huv_p) do
            H := sub<Omega | Hu_p, h>;
            if not IsPrimitive(H) then continue; end if;
            if #Core(H, Hu_p) ne 1 then continue; end if;
            M := Stabilizer(H, 1);
            if #Core(M, Mu_p) ne 1 then continue; end if;
            // Step 4: search for a forward element m in M
            for i in Hu_p do
                m := i * h;
                if m notin M then continue; end if;
                if not IsForwardElement(m, Mu_p) then continue; end if;
                if #sub<M | Mu_p, m> eq #M then
                    Append(~results, <H, M, Hu_p, Mu_p>);
                    break; end if;
            end for;
        end for;
    end for;
    return results;
end function;

// ---- Run for each amalgam ----
for i in [1..#Amalgams] do
    L := Amalgams[i][1]; B := Amalgams[i][2];
    Hu, psi := PermutationGroup(L);
    printf "Amalgam [%o] %o: ", i, GroupName(Hu);
    RL := SearchPairs(Hu, psi(B));
    if #RL eq 0 then printf "no valid pair\n";
    else quads := {@ <r[1],r[2],r[3],r[4]> : r in RL @};
        for q in quads do
            printf "(H,M,Hu,Mu) = (%o,%o,%o,%o)\n",
                GroupName(q[1]),GroupName(q[2]),
                GroupName(q[3]),GroupName(q[4]);
        end for;
    end if;
end for;
/* OUTPUT:
Amalgam [1] A4:     (S5, F5, A4, C2)
Amalgam [2..4,6,7]: no valid pair
Amalgam [5] S3*S4:  (A72, A71, S3*S4, C2)
*/
\end{verbatim}

\medskip

\section*{Code for Example~4.1}

\begin{verbatim}
// Construct Gamma = Cos(H, Y, YxY), H = PGL(2,7) wr S_2, Y = D_8 wr S_2.
// Verify Gamma is a normal-edge-transitive non-normal Cayley graph on
// G = AGL(1,7) x AGL(1,7), and Alt_M(Gamma) is half-arc-transitive.

// ---- Part 1: Group construction ----
L<a,b,c> := Group<a,b,c | a^2, b^3, c^4, (a*b)^8, c^-1*(a,b)>;
K := DerivedGroup(L);   // L' = PSL(2,7)
PL, phi := PermutationGroup(L);
X, SeqDeg, inc, proj := WreathProduct(PL, Sym(2));
a1 := SeqDeg[1](phi(a));  b1 := SeqDeg[1](phi(b));  c1 := SeqDeg[1](phi(c));
d := inc(Sym(2)!(1,2));               // coordinate swap

// ---- Part 2: Subgroups Y and G ----
// Y = N_{L'}(<c>) wr S_2 = D_8 wr S_2  (vertex stabilizer)
NKc := Normalizer(K, sub<L|c>);
Y := sub<X | SeqDeg[1](phi(NKc)), SeqDeg[2](phi(NKc)), d>;
assert #Y eq 128;

// G = AGL(1,7) x AGL(1,7)  (regular subgroup)
AGL17 := sub<L | a*b*c, c*(b,c)>;
G := sub<X | SeqDeg[1](phi(AGL17)), SeqDeg[2](phi(AGL17))>;
assert #G eq 42^2 and #(G meet Y) eq 1;  // G regular on [H:Y]

// ---- Part 3: Cayley graph Gamma ----
x := b1*c1^2*b1*c1*(a1^b1)^d;
S := {x, x^-1, x^d, (x^d)^-1};
assert #S eq 4;
YxY := {yl*x*yr : yl, yr in Y};
assert (YxY meet Set(G)) eq S;  // S = G cap YxY
GG := sub<G | SetToSequence(S)>;
assert #GG eq #G;
gra := CayleyGraph(GG : Labelled := false, Directed := false);
assert IsConnected(gra);

// ---- Part 4: Verify Aut(Gamma) = H ----
A := AutomorphismGroup(gra);
flag, iso := IsIsomorphic(X, A);
assert flag;   // Aut(Gamma) ~ PGL(2,7) wr S_2

// M = N_{Aut}(G) = G : <d>, maximal HAT group
N := Normalizer(A, iso(G));
assert N eq iso(sub<X | G, d>);
assert IsMaximal(A, N);
"Gamma: |V|=", Order(gra), "|M|=", #N, "|Aut|=", #A;

// ---- Part 5: Graph of M-alternating cycles ----
HATGrp := N;
GV := GSet(HATGrp);
VV := Vertices(gra);
u := Rep({Index(VV, w) : w in Neighbors(VV[1])});
Arcs := GSet(HATGrp, Orbit(HATGrp, <GV[1], {GV[u]}>));
D := Digraph<GV | Arcs>;
VD := Vertices(D);

// Trace one alternating cycle
v := VD[1]; AltCyc := [v]; flag := "tail";
while true do
    if flag eq "tail" then
        nbr := OutNeighbors; flag2 := "head";
    else nbr := InNeighbors; flag2 := "tail"; end if;
    N1 := nbr(AltCyc[1]) diff Seqset(AltCyc);
    if IsEmpty(N1) then break; end if;
    N2 := nbr(AltCyc[#AltCyc]) diff Seqset(AltCyc) diff {Rep(N1)};
    if IsEmpty(N2) then AltCyc := [Rep(N1)] cat AltCyc; break; end if;
    AltCyc := [Rep(N1)] cat AltCyc cat [Rep(N2)];
    flag := flag2;
end while;
altcyc := {GV[Index(VD, w)] : w in AltCyc};
AC := Orbit(HATGrp, altcyc);

// Attachment number
att := 0;
for C1, C2 in AC do
    s := #(C1 meet C2);
    if s ne 0 and s ne #Rep(AC) then att := s; break C1; end if;
end for;
assert att eq 1;
"Alt cycles:", #AC, "length:", #Rep(AC), "att:", att;

// Build Alt_M(Gamma) and verify it is HAT
EdgeAlt := GSet(HATGrp,
    {{C1, C2} : C1, C2 in AC | #(C1 meet C2) eq att});
AltGraph := Graph<AC | EdgeAlt>;
AutAlt := AutomorphismGroup(AltGraph);
"Alt_M: edge-trans=", IsEdgeTransitive(AltGraph), "arc-trans=", IsSymmetric(AltGraph);
assert IsEdgeTransitive(AltGraph) and not IsSymmetric(AltGraph);
// Conclusion: Alt_M(Gamma) is HAT (negative answer to Question 5.8).
\end{verbatim}

\medskip
\section*{Code for Example~4.2}

\begin{verbatim}
// Construct X = S3 x S4, act on Omega = [X : <a(cd)^2>], |Omega| = 72.
// Build H = Alt(Omega) ~= A_72. Find element x satisfying four conditions from Example 4.2.
// Verify Gamma = Cos(H, Y, YxY) ~= Cay(H_w, S) is a connected tetravalent (H,3)-arc-transitive non-normal Cayley graph,
// and Sigma = Cos(H, R(X), R(X)xR(X)) is H_w-HAT with M_u ~= Z2.

// ---- Part 1: Group setup ----
X1<a,b> := Group<a,b | a^2, b^3, (a*b)^2>;
X2<c,d> := Group<c,d | c^2, d^3, (c*d)^4>;
X<a,b,c,d> := DirectProduct(X1, X2);
assert #X eq 6*24;   // |S3 x S4| = 144

R := CosetAction(X, sub<X | a*(c*d)^2>);
assert Degree(R(X)) eq 72;       // |Omega| = 72

// ---- Part 2: H = Alt(Omega), subgroups Y and Z ----
Hs := Sym(GSet(R(X)));
H := NormalSubgroups(Hs)[2]`subgroup;  // Alt(Omega) ~ A_72
Hw := Stabilizer(H, 1);          // H_omega
h := R(a*(c*d)^2);

Y := sub<R(X) | R(DerivedSubgroup(X)), R(a*c)>;
assert #Y eq 72 and IsTransitive(Y) and #Stabilizer(Y, 1) eq 1;
Z := R(sub<X | b, d, a*c^(d*c)>);
assert #Z eq 18;

// ---- Part 3: Search for x and Cayley set S ----
ac_dc := R(a*c^(d*c));
assert Order(ac_dc) eq 2;
NC := Centralizer(Normalizer(H, Z), ac_dc);

RL := [];
for xx in NC do
    if Order(xx) ne 4 or xx^2 ne ac_dc then continue; end if;
    if (Y meet Y^xx) ne Z then continue; end if;
    if #sub<H | Y, xx> ne #H then continue; end if;
    Yx := {@ i*xx : i in Y @};
    YxY := Set({@ i*xx*j : i, j in Y @});
    for g in Yx do
        if g notin Hw or g^-1 notin YxY then continue; end if;
        S := {g, g^-1} join {g^h, (g^h)^-1};
        if #S ne 4 then continue; end if;
        if {i*s : i in Y, s in S} ne YxY then continue; end if;
        Append(~RL, <xx, g>); break xx;
    end for;
end for;
assert #RL ge 1;
x := RL[1][1]; g := RL[1][2];
S := {g, g^-1, g^h, (g^h)^-1};
assert #S eq 4 and g in Hw;

// ---- Part 4: Verify Gamma = Cay(H_w, S) ----
assert forall{s : s in S | s in Hw};
assert #sub<Hw | SetToSequence(S)> eq #Hw;   // connected
assert IsSimple(Hw) and not IsNormal(H, Hw); // H_w ~= A_71
assert h in Hw and {s^h : s in S} eq S;      // normal-edge-transitive

// ---- Part 5: Sigma = Cos(H, R(X), R(X)xR(X)) ----
RX := R(X);
assert #RX eq 144;
D := Set({@ i*x*j : i, j in RX @});
assert D eq {d^-1 : d in D} and #D div #RX eq 4;  // val 4
Mu := Hw meet RX;
assert #Mu eq 2 and Mu eq R(sub<X | a*(c*d)^2>);   // Z2
// HAT: Mu has 2 orbits of size 2 on neighbor cosets
reps := [];
for dd in D do
    if forall{r : r in reps | dd * r^-1 notin RX} then
        Append(~reps, dd);
    end if;
    if #reps eq 4 then break; end if;
end for;
mu := Rep({m : m in Mu | m ne Id(Mu)});
perm := [];
for i in [1..4] do
    for j in [1..4] do
        if reps[i]*mu*reps[j]^-1 in RX then
            Append(~perm, j); break;
        end if;
    end for;
end for;
assert #perm eq 4;
assert forall{i : i in [1..4] | perm[perm[i]] eq i and perm[i] ne i};

// ---- Part 6: H_w = <h, xw> ----
xw := Random({@ r*x : r in RX | r*x in Hw @});
assert #sub<Hw | h, xw> eq #Hw;
\end{verbatim}

\medskip
\section*{Code for Example~4.3}

\begin{verbatim}
// Verify: Gamma = K_{5,5} - 5K_2 has H ~= S5 with Gamma being
// (H,2)-arc-transitive (H_u ~= A4), and a core-free maximal
// subgroup M ~= AGL(1,5) with Gamma being M-HAT (M_u ~= Z_2).

// ---- Construct Gamma = K_{5,5} - 5K_2 ----
Ga := Graph<10 | {{i, j+5} : i in [1..5], j in [1..5] | i ne j}>;
V := Vertices(Ga);
assert #V eq 10 and Degree(V[1]) eq 4 and IsConnected(Ga);
A := AutomorphismGroup(Ga);   // Aut ~= S5 x C2

// ---- Find H ~= S5: vertex-transitive, (H,2)-arc-transitive ----
for s in Subgroups(A : OrderEqual := 120) do
    if IsTransitive(s`subgroup) then H := s`subgroup; break; end if;
end for;
Hu := Stabilizer(H, 1);
assert #H eq 120 and #Hu eq 12;  // |S5|=120, |A4|=12
nbrs := {Index(w) : w in Neighbors(V[1])};
j := Rep(nbrs);
assert #Orbit(Hu, j) eq 4;             // arc-transitive
Huv := Stabilizer(Hu, j);
nbrs_v := {Index(w) : w in Neighbors(V[j])} diff {1};
assert #Orbit(Huv, Rep(nbrs_v)) eq #nbrs_v;  // 2-arc-transitive

// ---- Find M ~= AGL(1,5): core-free maximal in H, M-HAT ----
for s in MaximalSubgroups(H) do
    if IsTransitive(s`subgroup) and #Core(H, s`subgroup) eq 1 then
        M := s`subgroup; break;
    end if;
end for;
Mu := Stabilizer(M, 1);
assert #M eq 20 and #Mu eq 2;  // |AGL(1,5)|=20, |Z_2|=2
assert #Orbit(Mu, j) eq 2;     // M is HAT
assert IsMaximal(H, M);
\end{verbatim}

\medskip
\section*{Code for Example~4.4}

\begin{verbatim}
// Cayley graph Cay(G, S) with |G| = 3^8
G<a,b,c,d> := Group<a,b,c,d | a^9, b^3, c^3, d^3, (b,c,b), (b,c,c), (b,c,d),
    a^-1*b*a*c^-1, a^-1*c*a*d^-1, a^-1*d*a*(b*(c,d))^-1>;
"Order of G:", #G;   // 6561

// Cayley graph via regular representation
phi, Q := CosetAction(G, sub<G|Id(G)>);
n := Degree(Q);
s1 := phi(a*b); s2 := phi(a*b^-1);
SS := {s1, s2, s1^-1, s2^-1};
Ga := Graph<n | [{i^s : s in SS} : i in [1..n]]>;
V := Vertices(Ga);
"Vertices:", #V, " Valency:", Degree(V[1]), " Connected:", IsConnected(Ga);

A := AutomorphismGroup(Ga);
A1 := Stabilizer(A, 1);
"Aut order:", #A, " Aut_1:", GroupName(A1);   // 52488, D4

// R_G(G) as Sylow 3-subgroup of Aut(Gamma)
R := SylowSubgroup(A, 3);
"R regular:", IsTransitive(R) and #Stabilizer(R,1) eq 1;
"R ~= G:", IsIsomorphic(R, Q);

// M = N_{Aut}(R_G(G))
M := Normalizer(A, R);
M1 := Stabilizer(M, 1);
"M order:", #M, " M/R:", GroupName(quo<M|R>);   // 13122, C2
"M maximal:", IsMaximal(A, M);

// Verify: A is arc-transitive, M is HAT
nbrs := {Index(w) : w in Neighbors(V[1])};
j := Rep(nbrs);
"A arc-trans:", #Orbit(A1, j) eq 4;
"M HAT:", #Orbit(M1, j) eq 2;

// Core K and quotient graph
K := Core(A, M);
"|R:R cap K|:", Index(R, R meet K);   // 3
"#K-orbits:", #Orbits(K);            // 3 (Gamma_K ~= C_3)
\end{verbatim}


\begin{thebibliography}{99}

\bibitem{AAMPS2016}
{J. A. Al-bar, A. N. Al-kenani, N. M. Muthana, C. E. Praeger and P. Spiga,}
Finite edge-transitive oriented graphs of valency four: a global approach,
\textit{Electron. J. Comb.} 23 (1) (2016) 1--10.

\bibitem{MAGMA}
W. Bosma, J. Cannon and C. Playoust,
The MAGMA algebra system I: The user language,
\textit{J. Symbolic Comput.} 24 (1997) 235--265.

\bibitem{CPS2015}
M. D. E. Conder, P. Poto\v{c}nik and P. \v{S}parl,
Some recent discoveries about half-arc-transitive graphs,
\textit{Ars Math. Contemp.} 8 (2015), no. 1, 149--162.


\bibitem{FangLiXu}
X. G. Fang, C. H. Li and M. Y. Xu,
On edge-transitive Cayley graphs of valency four,
\textit{European J. Combin.} 25 (2004) 1107--1116.

\bibitem{FangWZ}
X. Fang, J. Wang and S. Zhou,
Classification of tetravalent $2$-transitive non-normal Cayley graphs of finite simple groups,
\textit{Bull. Aust. Math. Soc.} 104 (2021) 263--271.

\bibitem{Godsil1981}
C. D. Godsil,
On the full automorphism group of a graph,
\textit{Combinatorica} 1 (1981) 243--256.

\bibitem{DPZ}
D. R. Hawtin, C. E. Praeger and J.-X. Zhou,
A characterisation of edge-affine $2$-arc-transitive covers of $\K_{2^n,2^n}$,
\textit{J. Combin. Theory Ser. A} 207 (2024) 105919.

\bibitem{Li-Lu-Zhang-JCTB}
C.H. Li, Z.P. Lu and H. Zhang,
Tetravalent edge-transitive Cayley graphs with odd number of vertices,
\textit{J. Combin. Theory Ser. B} 96 (2006) 164--181.

\bibitem{Marusic1998}
D. Maru\v{s}i\v{c},
Recent developments in half-transitive graphs,
\textit{Discrete Math.} 182 (1998), no. 1--3, 219--231.


\bibitem{M-DM}
D. Maru\v si\v c,
Quartic half-arc-transitive graphs with large vertex stabilizers,
\textit{Discrete Math.} 299 (2005) 180--193.

\bibitem{MM1999}
A. Malni\v c and D. Maru\v si\v c,
Constructing $4$-valent $\frac{1}{2}$-transitive graphs with a nonsolvable automorphism group,
\textit{J. Combin. Theory Ser. B} 75 (1) (1999) 46--55.

\bibitem{MN-JGT}
D. Maru\v si\v c and R. Neleda,
On the point stabilizers of transitive groups with non-self-paired suborbits of length $2$,
\textit{J. Group Theory} 4 (2001) 19--43.

\bibitem{MP1999}
D. Maru\v si\v c and C.E. Praeger,
Tetravalent graphs admitting half-transitive group actions: alternating cycles,
\textit{J. Combin. Theory Ser. B} 75 (2) (1999) 188--205.

\bibitem{MX1997}
D. Maru\v si\v c and M.-Y. Xu,
A $\frac{1}{2}$-transitive graph of valency $4$ with a nonsolvable group of automorphisms,
\textit{J. Graph Theory} 25 (1997) 133--138.

\bibitem{Praeger1993}
C. E. Praeger,
An O'Nan--Scott theorem for finite quasiprimitive permutation groups and an application to $2$-arc transitive graphs,
\textit{J. London Math. Soc. (2)} 47 (1993), 227--239.

\bibitem{Praeger1999}
C. E. Praeger,
Finite normal edge-transitive Cayley graphs,
\textit{Bull. Austral. Math. Soc.} 60 (1999) 207--220.

\bibitem{Primoz}
{P. Poto\v cnik,}
A list of $4$-valent $2$-arc-transitive graphs and finite faithful amalgams of index $(4, 2)$,
\textit{European J. Combin.} 30 (2009) 1323--1336.

\bibitem{PSparl2017}
P. Poto\v cnik and P. \v Sparl,
On the radius and the attachment number of tetravalent half-arc-transitive graphs,
\textit{Discrete Math.} 340 (2017) 2967--2971.

\bibitem{PW-JCTB2007}
P. Poto\v cnik and S. Wilson,
Tetravalent edge-transitive graphs of girth at most $4$,
\textit{J. Combin. Theory Ser. B} 97 (2007) 217--236.

\bibitem{RSparl2019}
{A. Ramos Rivera} and P. \v Sparl,
New structural results on tetravalent half-arc-transitive graphs,
\textit{J. Combin. Theory Ser. B} 135 (2019) 256--278.

\bibitem{Spiga-Xia2021}
P. Spiga and B. Xia,
Constructing infinitely many half-arc-transitive covers of tetravalent graphs,
\textit{J. Combin. Theory Ser. A} 180 (2021) 105406.

\bibitem{Tutte1966}
W. T. Tutte,
\textit{Connectivity in Graphs}, University of Toronto Press, Toronto, 1966.

\bibitem{Xia-2021}
B. Xia,
Tetravalent half-arc-transitive graphs with unbounded nonabelian vertex stabilizers,
\textit{J. Combin. Theory Ser. B} 147 (2021) 159--182.

\bibitem{Zhang-F-Z-Y}
X. Zhang, Y.-Q. Feng. J.-X. Zhou and F.-G. Yin,
Normal edge-transitive Cayley graphs on non-abelian simple groups,
\textit{J. Combin. Theory Ser. A} 215 (2025) 106050.

\bibitem{Zhou-JACO}
J.-X.~Zhou,
Tetravalent half-arc-transitive $p$-graphs,
\textit{J. Algebr. Comb.} 44 (2016) 947--971.

\bibitem{Zhou-PAMS}
J.-X.~Zhou,
On tetravalent half-arc-transitive graphs,
to appear in \textit{Proc. Amer. Math. Soc.} \url{https://arxiv.org/abs/2508.21336}.

\end{thebibliography}
\end{document}